\definecolor{DarkBlue}{rgb}{0,0.2,0.6}
\definecolor{PinkPurple}{rgb}{0.8,0.3,0.3}
\newtheorem{thm}{Theorem}[section]
\newtheorem{lemma}[thm]{Lemma}
\newtheorem{prop}[thm]{Proposition}
\newtheorem{crl}[thm]{Corollary}
\theoremstyle{definition}
\newtheorem{dfn}[thm]{Definition}
\newtheorem{exm}[thm]{Example}
\newtheorem{rem}[thm]{Remark}
\newcommand{\reals}{\mathbb{R}}
\newcommand{\naturals}{\mathbb{N}}
\newcommand{\rx}{\sgr{\mathbb{R}}{\ux}}
\newcommand{\ux}{\underline{X}}
\newcommand{\supp}{\text{supp}}
\newcommand{\spn}{\text{Span}}
\newcommand{\dm}[1]{\preccurlyeq_{#1}}
\newcommand{\sgr}[2]{#1[#2]}
\newcommand{\ringsop}[2]{\sum #1^{#2}}
\newcommand{\norm}[2]{\|\ifthenelse{\isempty{#2}}{\cdot}{#2}\|_{#1}}
\newcommand{\cl}[2]{\overline{#2}^{\ifthenelse{\isempty{#1}}{}{#1}}}
\newcommand{\Cnt}[2]{\mathrm{C}_{\ifthenelse{\isempty{#1}}{}{#1}}(#2)}
\newcommand{\Psd}[2]{\mbox{Psd}_{\ifthenelse{\isempty{#1}}{}{#1}}(#2)}
\newcommand{\Bnd}[2]{\mbox{B}_{\ifthenelse{\isempty{#1}}{}{#1}}(#2)}
\newcommand{\Sp}[2]{\mathfrak{sp}_{\ifthenelse{\isempty{#1}}{}{#1}}(#2)}
\newcommand{\map}[3]{#1:#2\longrightarrow #3}
\begin{document}

\title[Integral representation of functionals]{Integral representation of linear functionals on function spaces}
%
\author[M. Ghasemi]{Mehdi Ghasemi}
%
%
%
\address{School of Physical \& Mathematical Sciences,\newline\indent
Nanyang Technological University,\newline\indent
21 Nanyang Link, 637371, Singapore}
\email{mghasemi@ntu.edu.sg}
\keywords{topology, continuous functions,
measure, integral representation, linear functional}
\subjclass[2010]{Primary 47A57, 28C05, 28E99; 
Secondary 44A60, 46J25.}
\date{\today}
\begin{abstract}
Let $A$ be a vector space of real valued functions on a non-empty set $X$ and $\map{L}{A}{\reals}$ a linear functional.
Given a $\sigma$-algebra $\mathcal{A}$, of subsets of $X$, we present a necessary condition for $L$ to be representable 
as an integral with respect to a measure $\mu$ on $X$ such that elements of $\mathcal{A}$ are $\mu$-measurable.
This general result then is applied to the case where $X$ carries a topological structure and $A$ is a family of continuous
functions and naturally $\mathcal{A}$ is the Borel structure of $X$. 
As an application, short solutions for the full and truncated $K$-moment problem are presented. An analogue of 
Riesz--Markov--Kakutani representation theorem is given where $\Cnt{c}{X}$ is replaced with whole $\Cnt{}{X}$. 
Then we consider the case where $A$ only consists of bounded functions and hence is equipped with $\sup$-norm.
\end{abstract}
\maketitle
\section{Introduction}
A positive linear functional on a function space $A\subseteq\reals^X$ is a linear map $\map{L}{A}{\reals}$ which assigns a 
non-negative real number to every function $f\in A$  that is globally non-negative over $X$.
The celebrated Riesz--Markov--Kakutani representation theorem states that every positive functional on the space of continuous
compactly supported functions over a locally compact Hausdorff space $X$, admits an integral representation with
respect to a regular Borel measure on $X$. In symbols $L(f)=\int_Xf~d\mu$, for all $f\in\Cnt{c}{X}$. Riesz's original result 
\cite{Riesz} was proved in 1909, for the unit interval $[0,1]$. Then in 1938, Markov extended Riesz's result to some non-compact
spaces \cite{Markov}. Later in 1941, Kakutani \cite{Kakutani} proved the result for compact Hausdorff spaces. 
Some authors such as Pedersen \cite[\S 6.1]{Ped}, take the reverse approach. Instead of defining an integral operator based on a
pre-existing measure, he starts with a special type linear functionals. He calls the class of positive linear functionals 
on $\Cnt{c}{X}$, the ``Radon integrals'' and then fixing such a functional, defines integrable functions and the measure which 
corresponds to $L$. This approach has introduced and studied by P. J. Daniell in 1918 \cite{Daniell} and nowadays is known
as Daniell integral theory.

Perhaps one of the most well-known consequences of Riesz'z result is its application to solve the classical moment problem, given
by Haviland in 1936. Haviland published series of two papers \cite{Hav01, Hav02}, and gave a complete solution for the $K$-moment 
problem. Let $\map{L}{\rx=\reals[X_1,\dots,X_n]}{\reals}$ be a linear functional on the space of real polynomials in $n$ variables and 
$K\subseteq\reals^n$ a closed set. The classical $K$-moment problem asks when $L$ admits an integral representation with respect 
to a Borel measure supported on $K$. Haviland proved that such a measure exists if and only if $L$ maps every non-negative polynomial
on $K$ to a non-negative real number. Let us demystify the connection and similarity between these to results.

Fix a subspace $A\subseteq\reals^X$ where $X$ is a locally compact Hausdorff space and $K\subseteq X$, a closed set. The set of all
functions $f\in A$ that are non-negative on $K$ will be denoted by $\Psd{A}{K}$:
\[
	\Psd{A}{K}:=\{f\in A~:~f(x)\ge0\quad\forall x\in K\}.
\]
Let $\map{L}{A}{\reals}$ be a linear functional. For $A=\Cnt{c}{X}$, Riesz--Markov--Kakutani's result is the following statement:
\begin{thm}\label{RMK}
There exists a Borel measure on $X$ representing $L$ if and only if $L(\Psd{\Cnt{c}{X}}{X})\subseteq[0,\infty)$.
\end{thm}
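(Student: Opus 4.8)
The forward implication is immediate: if $L(f)=\int_X f\,d\mu$ for some Borel measure $\mu$ and $f\in\Psd{\Cnt{c}{X}}{X}$, then $f\ge 0$ pointwise on $X$, hence $\int_X f\,d\mu\ge 0$. For the converse, assume $L(\Psd{\Cnt{c}{X}}{X})\subseteq[0,\infty)$; by linearity $L$ is then monotone, since $f\le g$ forces $g-f\in\Psd{\Cnt{c}{X}}{X}$. The plan is to run the classical Riesz construction: manufacture an outer measure directly from $L$, cut it down to a genuine Borel measure via Carathéodory, and then verify that it represents $L$.

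First I would build the set function. For open $V\subseteq X$ write $f\prec V$ for the assertion $f\in\Cnt{c}{X}$, $0\le f\le 1$, $\supp f\subseteq V$, and set $\mu(V):=\sup\{L(f):f\prec V\}$; for arbitrary $E\subseteq X$ set $\mu^{*}(E):=\inf\{\mu(V):E\subseteq V,\ V\text{ open}\}$. Monotonicity of $\mu$ on open sets makes these definitions compatible, and $\mu^{*}$ is plainly monotone. The first real step is countable subadditivity: given $E=\bigcup_n E_n$ and $\varepsilon>0$, choose open $V_n\supseteq E_n$ with $\mu(V_n)<\mu^{*}(E_n)+\varepsilon 2^{-n}$; for any $f\prec\bigcup_n V_n$ the set $\supp f$ is compact, hence covered by finitely many of the $V_n$, and a partition of unity subordinate to that finite subcover (Urysohn's lemma on a locally compact Hausdorff space) yields $f=\sum_i f_i$ with $f_i\prec V_{n_i}$, so $L(f)=\sum_i L(f_i)\le\sum_n\mu(V_n)\le\sum_n\mu^{*}(E_n)+\varepsilon$; taking the supremum over $f$ gives $\mu^{*}(E)\le\sum_n\mu^{*}(E_n)+\varepsilon$. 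Thus $\mu^{*}$ is an outer measure; let $\mathcal{M}$ be its $\sigma$-algebra of Carathéodory-measurable sets and $\mu:=\mu^{*}|_{\mathcal{M}}$ the resulting complete measure. Next I would verify that every open set lies in $\mathcal{M}$ — the standard argument splitting an arbitrary test set through an open set — so that $\mathcal{M}$ contains every Borel set; and that $\mu(K)<\infty$ for compact $K$, since by Urysohn there is $f\in\Cnt{c}{X}$ with $f\ge 1$ on $K$, whence $\mu(K)\le L(f)<\infty$. Hence $\mu$ is a bona fide (locally finite) Borel measure.

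It then remains to prove $L(f)=\int_X f\,d\mu$ for all $f\in\Cnt{c}{X}$. Both sides being linear in $f$, it suffices to establish $L(f)\le\int_X f\,d\mu$ and apply the same inequality to $-f$. Given real $f\in\Cnt{c}{X}$ with range in $[a,b]$ and $\varepsilon>0$, choose reals $y_0<y_1<\cdots<y_N=b$ with $y_0<a$ and $y_i-y_{i-1}<\varepsilon$, put $E_i=\{x\in\supp f:y_{i-1}<f(x)\le y_i\}$ (Borel sets partitioning $\supp f$), pick open $V_i\supseteq E_i$ with $f<y_i+\varepsilon$ on $V_i$ and $\mu(V_i)<\mu(E_i)+\varepsilon/N$, and take $h_i\prec V_i$ with $\sum_i h_i=1$ on $\supp f$. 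Since $h_i\ge 0$ gives $L(h_i)\ge 0$ and $h_i f\le(y_i+\varepsilon)h_i$ pointwise, the identity $f=\sum_i h_i f$ and the standard bookkeeping yield
\[
L(f)=\sum_i L(h_i f)\le\sum_i(y_i+\varepsilon)L(h_i)\le\int_X f\,d\mu+\varepsilon\,C,
\]
where $C$ depends only on $f$ and on $\mu(W)$ for a fixed open $W\supseteq\supp f$ with $\mu(W)<\infty$; the second inequality comes from comparing $\sum_i(y_i+\varepsilon)L(h_i)$ with $\sum_i(y_{i-1}-\varepsilon)\mu(E_i)\le\int_X f\,d\mu$. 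Letting $\varepsilon\to 0$ finishes the argument.

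The main obstacle is this last step together with the prior verification that open sets are Carathéodory-measurable: these constitute the technical heart of the theorem, and both rely essentially on having Urysohn's lemma and subordinate partitions of unity available — which is exactly where local compactness and the Hausdorff property are used. An alternative that sidesteps the Carathéodory machinery is the Daniell route alluded to in the introduction: extend $L$ by monotone limits to lower semicontinuous functions, define $\mu(E)$ through the outer integral of $\mathbf{1}_E$, and identify the Borel sets as a subfamily of the Daniell-integrable sets; essentially the same subadditivity and partition-of-unity estimates recur. One could also obtain the statement as a special case of the general integral-representation criterion developed later in the paper, taking $\mathcal{A}$ to be the Borel $\sigma$-algebra of $X$.
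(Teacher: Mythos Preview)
Your outline is the standard textbook construction (essentially Rudin's proof), and it is correct as a sketch; the forward implication, the outer-measure definition via $f\prec V$, countable subadditivity through partitions of unity, Carath\'eodory measurability of open sets, and the final layer-cake estimate are all the right ingredients, with local compactness and the Hausdorff property entering exactly where you say they do.

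However, there is nothing to compare against: the paper does not prove Theorem~\ref{RMK}. It is stated in the introduction as the classical Riesz--Markov--Kakutani theorem, with attribution to Riesz, Markov, and Kakutani, and is thereafter used as a black box (notably in the proof of Corollary~\ref{hvlnd1}). What the paper does prove is an abstract analogue, Lemma~\ref{msr-rep}\eqref{msr-rep-4}, where a measure is manufactured on an arbitrary $\sigma$-algebra $\mathcal{A}$ from a positive functional on a lattice-complete algebra $B$ via the extension $\bar L$ of Theorem~\ref{chq} and the assignment $\mu(Y)=\bar L(\chi_Y)$, provided $B$ is $\rho_{\bar L}$-dense in $B+S_{\mathcal{A}}$. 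That route avoids the Carath\'eodory machinery entirely---it builds the measure directly on the given $\sigma$-algebra by evaluating the extended functional on characteristic functions---but it assumes the density hypothesis rather than deriving it, and when specialised to $B=\Cnt{c}{X}$ and $\mathcal{A}=\mathcal{B}(X)$ the paper still invokes the classical Riesz--Markov--Kakutani theorem rather than its own lemma (see the remark following Corollary~\ref{hvlnd1}). Your closing suggestion, that one could recover Theorem~\ref{RMK} from the paper's general criterion, is therefore slightly optimistic: the paper treats the implication in the other direction.
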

Also, for $K\subseteq\reals^n=X$ and $A=\rx$, Haviland's theorem is the following:
\begin{thm}\label{HAV}
There exists a Borel measure on $K$ representing $L$ if and only if $L(\Psd{\rx}{K})\subseteq[0,\infty)$.
\end{thm}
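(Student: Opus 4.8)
The plan is to deduce Theorem~\ref{HAV} from Theorem~\ref{RMK}. One implication is trivial: if $L(f)=\int_K f\,d\mu$ for all $f\in\rx$, then $f\ge 0$ on $K$ forces $L(f)\ge 0$ because $\mu$ is carried by $K$. For the converse, suppose $L(\Psd{\rx}{K})\subseteq[0,\infty)$. Write $X=\reals^n$, set $m:=1+X_1^2+\cdots+X_n^2\in\rx$ (so $m\ge 1$ on $X$ and $p(x)/m(x)^k\to 0$ as $|x|\to\infty$ whenever $2k>\deg p$), and work inside the real vector space $E:=\rx+\Cnt{c}{X}$ of functions on $X$, equipped with the convex cone $\mathcal{C}:=\{g\in E:g\ge 0\text{ on }K\}$. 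By hypothesis $L$ is nonnegative on $\rx\cap\mathcal{C}=\Psd{\rx}{K}$, and the first step is to extend $L$ to a $\mathcal{C}$-positive functional on all of $E$.

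For this I would invoke the M.~Riesz extension theorem, whose hypothesis here reduces to the identity $E=\rx+\mathcal{C}$: given $g=p+f$ with $p\in\rx$ and $f\in\Cnt{c}{X}$, put $C:=\sup_X|f|<\infty$ and write $g=(p-C)+(f+C)$ with $p-C\in\rx$ and $f+C\in\mathcal{C}$. Thus $L$ extends to a linear $\widehat L:E\to\reals$ with $\widehat L(\mathcal{C})\subseteq[0,\infty)$. Restricting $\widehat L$ to $\Cnt{c}{X}$ gives a functional nonnegative on $\Psd{\Cnt{c}{X}}{X}\subseteq\mathcal{C}$, so Theorem~\ref{RMK} produces a Borel measure $\mu$ on $X$ with $\widehat L(f)=\int_X f\,d\mu$ for all $f\in\Cnt{c}{X}$. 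To localise the mass, note that if $x_0\notin K$ one may pick $f\in\Cnt{c}{X}$ with $0\le f\le 1$, $f(x_0)=1$ and $\supp f\cap K=\emptyset$; then $-f\in\mathcal{C}$, so $0\le\widehat L(-f)=-\int_X f\,d\mu\le 0$ and $\mu$ vanishes near $x_0$. Hence $\supp\mu\subseteq K$, i.e. $\mu$ is a Borel measure on $K$.

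The crux is to show $L(p)=\int_K p\,d\mu$ for every $p\in\rx$, which Theorem~\ref{RMK} does not supply directly, polynomials not being compactly supported. Take first $p\in\rx$ with $p\ge 0$ on all of $X$, and choose cutoffs $\psi_N\in\Cnt{c}{X}$ with $0\le\psi_N\le 1$, $\psi_N\uparrow 1$ and $\psi_N\equiv 1$ on $\{|x|\le N\}$. Then $f_N:=p\psi_N\in\Cnt{c}{X}$, $0\le f_N\uparrow p$, and $p-f_N\in\mathcal{C}$, so $\int_X f_N\,d\mu=\widehat L(f_N)\le\widehat L(p)$; monotone convergence gives $\int_X p\,d\mu\le\widehat L(p)<\infty$, so in particular every polynomial is $\mu$-integrable. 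For the reverse inequality, fix $\varepsilon>0$ and $k$ with $2k>\deg p$; since $p/m^k\to 0$ at infinity there is $R$ with $p\le\varepsilon m^k$ on $\{|x|>R\}$, whence $p-f_N=p(1-\psi_N)\le\varepsilon m^k$ on all of $X$ once $N\ge R$. Therefore $\varepsilon m^k-(p-f_N)\in\mathcal{C}$, giving $\widehat L(p-f_N)\le\varepsilon\,\widehat L(m^k)=\varepsilon L(m^k)$; letting $N\to\infty$ (using $\widehat L(p-f_N)=\widehat L(p)-\int_X f_N\,d\mu\to\widehat L(p)-\int_X p\,d\mu$) and then $\varepsilon\to 0$ yields $\widehat L(p)\le\int_X p\,d\mu$, hence equality for all such $p$. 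Finally, for arbitrary $p\in\rx$ pick $k$ with $2k>\deg p$ and $C:=\sup_X\bigl(-p/m^k\bigr)<\infty$, so that $p+Cm^k$ and $Cm^k$ are both nonnegative on $X$; applying the previous case to each and subtracting gives $L(p)=\widehat L(p)=\int_K p\,d\mu$.

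I expect the main obstacle to be exactly the ``$\widehat L(p)\le\int_X p\,d\mu$'' half above: the extension step and the localisation of $\supp\mu$ are soft, but ruling out escape of mass of $\mu$ to infinity requires the coercive polynomial $m$ and, concretely, the domination $p(1-\psi_N)\le\varepsilon m^k$ of the tail of $p$ by a small multiple of $m^k$. A more classical route --- compactifying $\reals^n$ to a sphere $S^n$ and transporting $L$ to $\Cnt{}{S^n}$ --- also suggests itself but is fiddlier, since the natural functional on the dense subspace $\spn\{f/m^k\}\subseteq\Cnt{}{S^n}$ is not well defined by $f/m^k\mapsto L(f)$; I would avoid it. Presumably the author's proof instead reads Theorem~\ref{HAV} off the general representation theorem announced in the introduction, which should package this coercivity bookkeeping abstractly.
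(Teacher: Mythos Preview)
Your argument is correct. It is essentially the classical proof that the paper sketches in the introduction (the four enumerated steps following Theorem~\ref{HAV}): a positive extension via Theorem~\ref{chq}, Riesz--Markov--Kakutani on the compactly supported part, and a tail estimate using the coercive polynomial $m=1+\sum X_i^2$. Your organisation differs slightly in that you extend only to the vector space $\rx+\Cnt{c}{\reals^n}$ rather than to the lattice hull $\check{A}$, you work on $\reals^n$ and localise $\supp\mu\subseteq K$ afterwards rather than working on $K$ from the start, and your decomposition for general $p$ should take $C=\sup_X|p|/m^k$ (or handle $C<0$ separately) so that $Cm^k\ge 0$. The paper's own formal proof is Corollary~3.4, which, exactly as you guessed, reads Haviland off the general machinery: one notes that $p=\sum X_i^2$ has compact sublevel sets and invokes Corollary~\ref{Mar-Hav} (Marshall), whose proof in turn packages your cutoff-and-domination step as the statement that $\rx$ is $\Cnt{c}{X}$-adapted and then appeals to Theorem~\ref{gen-chq}. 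So the two routes use the same ingredients; the paper's contribution is to isolate the ``adaptedness'' condition so that the bookkeeping you did by hand becomes a single hypothesis to verify.
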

The proof of Haviland's theorem relies on Riesz--Markov--Kakutani's result and involves the following steps:
\begin{enumerate}
	\item{\label{st1}
	Extend $L$ positively to the algebra $\check{A}$ consisting of all continuous functions, bounded by a polynomial on $K$.
	}
	\item{\label{st2}
	Showing that $\check{A}$ contains $\Cnt{c}{K}$.
	}
	\item{\label{st3}
	Applying Riesz--Markov--Kakutani representation theorem to the restriction of the map from step \eqref{st1} on $\Cnt{c}{K}$.
	}
	\item{\label{st4}
	Proving that the measure obtained in step \eqref{st3} also represents $L$ on $\rx$.
	}
\end{enumerate}
Suppose that $\mu$ is a finite Borel measure on $X$ and let $\norm{\mu,1}{}$ be the seminorm induces by $\mu$ on $L_1(\mu)$.
One can show that the characteristic function of every Borel set, can be approximated by compactly supported functions in
$\norm{\mu,1}{}$. Therefore $\Cnt{c}{X}$ is dense in $\Cnt{c}{X}\oplus S_{\mathcal{B}(X)}$ where $S_{\mathcal{B}(X)}$ 
is the algebra of the \textit{simple functions}, generated by characteristic functions of all Borel subset of $X$. 
On the other hand, simple functions are dense in $L_1(\mu)$ which implies the denseness of $\Cnt{c}{X}$ in $L_1(\mu)$.

In section \ref{IRPSD}, we generalize this idea to any abstract measure structures on $X$. In lemma \ref{msr-rep}\eqref{msr-rep-4}
we prove an analogue of theorem \ref{RMK} and will use this result to prove theorem \ref{gen-chq} which is a general version of 
Haviland's theorem \ref{HAV}. 

In section \ref{App2Cnt}, we consider perhaps the most interesting case, where the underlying domain
consists of continuous functions. We presents short proofs for results of Choquet (Corollary \ref{hvlnd1}) and Marshall (Corollary 
\ref{Mar-Hav}). Also we apply the results of section \ref{IRPSD} to give a simple solution for truncated moment problem in \S\ref{TMP}.
The Riesz--Markov--Kakutani representation theorem is stated for $\Cnt{c}{X}$ which easily generalizes to $\Cnt{0}{X}$, the space of 
functions vanishing at infinity. In \S\ref{Riesz-Cnt} we investigate the case where the domain is extended to whole $\Cnt{}{X}$ and
show that in this case, the support of representing measure has to be compact set (Theorem \ref{gen-sig-Riesz}).

In section \ref{lmc2d}, we focus on subalgebras of $\ell^{\infty}(X)$, the algebra of globally bounded functions. This algebra 
naturally is equipped with a norm topology, where the norm is defined by $\norm{X}{f}=\sup_{x\in X}|f(x)|$.
We prove that every continuous positive functional on a subalgebra $A$ of $\ell^{\infty}(X)$, admits an integral representation 
with respect to a unique Radon measure on the Gelfand spectrum $\Sp{\norm{X}{}}{A}$ of $(A,\norm{X}{})$. We also consider the 
possibility of existence of a representing measure on $X$, where $X$ can be realised as a dense subspace of $\Sp{\norm{X}{}}{A}$.

\section{Integral Representation of a Positive Functional}\label{IRPSD}
In this section we study integral representation of a linear functional on a subalgebra of $\reals^X$.
We also prove a variation of Riesz representation theorem which holds for $\sigma$-compact, locally compact spaces and 
replaces $\Cnt{0}{X}$ with $\Cnt{}{X}$ (Theorem \ref{gen-sig-Riesz}).

The following result
plays a key role in this section.
\begin{thm}\label{chq}
Let $W$ be a subspace of an $\reals$-vector space $V$ and $C\subseteq V$ a convex cone. Let $\map{L}{W}{\reals}$ be a functional 
with $L(W\cap C)\ge0$. Then $L$ admits an extension $\tilde{L}$ on $W_C$ such that $\tilde{L}(W_C\cap C)\ge0$. Here
\[
	W_C=\{v\in V~:~\pm v\in C+W\}.
\]
\end{thm}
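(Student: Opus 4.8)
The statement asks us to extend a functional $L$, defined on a subspace $W$ and nonnegative on $W \cap C$, to the larger subspace $W_C = \{v \in V : \pm v \in C + W\}$, keeping nonnegativity on $W_C \cap C$. This is a Hahn--Banach–type extension result, so the natural route is a Zorn's lemma argument: consider the poset of all pairs $(W', L')$ where $W \subseteq W' \subseteq W_C$ is a subspace, $L'|_W = L$, and $L'(W' \cap C) \ge 0$, ordered by extension. Chains have upper bounds (take unions), so a maximal element $(W^*, L^*)$ exists; the crux is to show $W^* = W_C$. The heart of the matter is therefore the \emph{one-step extension lemma}: if $W \subseteq W' \subsetneq W_C$ properly and $L'$ is nonnegative on $W' \cap C$, then for any $v_0 \in W_C \setminus W'$ one can extend $L'$ to $W' + \reals v_0$ preserving nonnegativity on the intersection with $C$.

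**The one-step extension.** First I would reduce to the case $W' = W$ (just rename). Fix $v_0 \in W_C \setminus W$; by definition of $W_C$ there exist $c_{\pm} \in C$ and $w_{\pm} \in W$ with $v_0 = c_+ + w_+$ and $-v_0 = c_- + w_-$, equivalently $v_0 - w_+ \in C$ and $-v_0 - w_- \in C$, so $v_0$ is ``sandwiched'' between two elements of $W$ modulo $C$. Following the classical Riesz–extension proof, I would look for the correct value $t := \tilde{L}(v_0)$ by examining all constraints. The extension $\tilde{L}(w + \lambda v_0) = L(w) + \lambda t$ must satisfy: whenever $w + \lambda v_0 \in C$ we need $L(w) + \lambda t \ge 0$. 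Splitting on the sign of $\lambda$ and using homogeneity of the cone, this amounts to
\[
	\sup\{-\tfrac{1}{\lambda}L(w) : \lambda > 0,\ w + \lambda v_0 \in C\} \le t \le \inf\{\tfrac{1}{\mu}L(w') : \mu > 0,\ w' - \mu v_0 \in C\}.
\]
The existence of a valid $t$ requires showing the left supremum does not exceed the right infimum; this is where I expect the real work to be. Given $w + \lambda v_0 \in C$ and $w' - \mu v_0 \in C$, scaling by $\mu$ and $\lambda$ respectively and adding gives $\mu w + \lambda w' \in C$ (cone closed under addition and positive scaling), and since $\mu w + \lambda w' \in W$, nonnegativity of $L$ on $W \cap C$ yields $\mu L(w) + \lambda L(w') \ge 0$, i.e. $-\tfrac{1}{\lambda}L(w) \le \tfrac{1}{\mu}L(w')$. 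Hence the two sides are separated and a suitable $t$ exists. One must also check both the sup-set and inf-set are nonempty (so $t$ is finite) --- this is exactly where the defining property of $W_C$ is used: $v_0 - w_+ \in C$ puts $w_+$ (with $\lambda=1$... careful with signs) into the relevant set, and similarly $-v_0 - w_- \in C$ populates the other; if one set is empty the corresponding bound is $\pm\infty$ and any finite $t$ on the other side works. With such a $t$ chosen, verifying that $\tilde{L}$ is nonnegative on $(W + \reals v_0) \cap C$ is a direct unwinding of the inequalities above.

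**The main obstacle.** The genuinely delicate point is the bookkeeping with signs and the cases $\lambda = 0$, $\lambda > 0$, $\lambda < 0$ in the one-step extension, together with confirming finiteness of $t$ from the membership $\pm v_0 \in C + W$; everything else (Zorn, union of chains, the final deduction $W^* = W_C$ by contradiction with maximality) is routine. I would present the one-step lemma cleanly first, then assemble the Zorn argument in a short paragraph. Note $W_C$ is itself a subspace (closed under addition since $C + C \subseteq C$, and under scaling since $C$ is a cone and $\pm$ is built in), so the statement is well-posed and the maximal $W^*$ really can be pushed all the way to $W_C$.
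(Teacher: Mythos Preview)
Your proposal is correct. The paper takes a closely related but differently packaged route: rather than running Zorn's lemma with a one-step extension, it defines the function $p(v)=-\sup\{L(w):w\in W,\ v-w\in C\}$ on $W_C$, checks that $p$ is well-defined (this is where $\pm v\in C+W$ is used), sublinear, and satisfies $p|_W=-L$, and then invokes the Hahn--Banach theorem to extend $-L$ to some $-\tilde L$ on $W_C$ dominated by $p$; positivity of $\tilde L$ on $W_C\cap C$ then follows because $p(c)\le 0$ for $c\in C$ (take $w=0$). Your sup and inf are, after normalising $\lambda,\mu$ to $1$ via the homogeneity of $C$, exactly $-p(v_0)$ and $p(-v_0)$, so the core inequality you prove is the same one that makes $p$ sublinear; you are in effect re-deriving the Hahn--Banach step (i.e.\ giving the M.~Riesz extension argument directly), while the paper outsources the transfinite part to Hahn--Banach as a black box. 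The paper's version is shorter; yours is more self-contained and makes the role of the hypothesis $\pm v_0\in C+W$ (nonemptiness of both constraint sets, hence finiteness of the chosen $t$) fully explicit. One small remark: since $v_0\in W_C$ guarantees both sets are nonempty, your hedge ``if one set is empty'' never actually occurs.
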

\begin{proof}
It is clear that $W_C$ is a subspace of $V$ containing $W$. We show that the function $p(v)=-\sup\{L(w):w\in W\wedge v-w\in C\}$ 
which is defined on $W_C$ is a sublinear function such that $p|_W=-L$. 
To see this note that there are $w,w'\in W$ and $c,c'\in C$ such that $v=w+c=w'-c'$. Thus $w'-w=c+c'\in C\cap W$ and hence 
$L(w'-w)\ge0$ or $L(w)\leq L(w')$. Therefore the set $\{L(w):w\in W\wedge v-w\in C\}$ is non-empty and bounded above. Hence $p(v)$
exists. Clearly $p(\lambda v)=\lambda p(v)$, so it remains to show that $p(v+v')\leq p(v)+p(v')$. If $v-w\in C$ and $v'-w'\in C$,
then $(v+v')-(w+w')\in C+C=C$. Thus $-p(v)-p(v')\leq-p(v+v')$ or equivalently $p(v+v')\leq p(v)+p(v')$. For every $v\in W$, 
$0=v-v\in C$, therefore $p(v)\leq-L(v)$. Also for every $w\in W$ with $v-w\in C$, we have $L(w)\leq L(v)$, because $L(W\cap C)\ge0$.
Therefore $-L(v)\leq p(v)$ which proves $p|_W=-L$.

Applying Hahn-Banach theorem, $-L$ admits an extension $-\tilde{L}$ to $W_C$ such that
$-\tilde{L}(v)\leq p(v)$ on $W_C$. For $c\in C\cap W_C$, $p(c)\leq0$ and hence $0\leq-p(c)\leq\tilde{L}(c)$ as desired (For the
original result, see \cite[Theorem 34.2]{chq}).
\end{proof}
\begin{dfn}
For a vector subspace $A$ of $\reals^X$, 
\begin{enumerate}
\item{
The set of all elements of $A$ that are positive on $X$ is denoted by $\Psd{A}{X}$. 
\[
	\Psd{A}{X}=\{a\in A~:~\forall x\in X\quad a(x)\ge0\};
\]
}
\item{
The \textit{lattice hull} of $A$ denote by $\check{A}$ is defined to be 
\[
	\check{A}:=\{f\in\reals^X~:~\exists g\in A\quad|f|\leq|g|\}.
\]
$A$ is said to be \textit{lattice complete} if $\check{A}=A$.
}
\item{
Let $B\subseteq\reals^X$ be a vector subspace of $\reals^X$. For $f,g\in A$, we say \textit{$g$ is dominated by $f$ with respect to $B$} 
and write $g\dm{B}f$ if
\[
	\forall\epsilon>0~\exists h\in B\quad |g|\leq\epsilon|f|+h.
\]
We say $A$ is \textit{$B$-adapted} if $\forall g\in A~\exists f\in A$ such that $g\dm{B}f$.
}
\item{
For an algebra $\mathcal{A}$ of subsets of $X$, we denote by $S_{\mathcal{A}}$, the algebra generated by characteristic functions of 
elements of $\mathcal{A}$\footnote{$\chi_Y$ denotes the character function of the set $Y$ defined by $\chi_Y(x)=1$ if $x\in Y$ and $0$ 
otherwise.}.
}
\item{
The set of all (infinite) sums of simple functions that are bounded above by an element of $A$ in absolute value will be denoted by
$\cl{A}{S_{\mathcal{A}}}$,
\[
	\cl{A}{S_{\mathcal{A}}}:=\{\sum_{i=1}^{\infty}\lambda_i\chi_{X_i}~:~X_i\in\mathcal{A}\wedge\exists a\in A\quad|\sum_{i=1}^{\infty}\lambda_i\chi_{X_i}|\leq a\}.
\]
It is clear that $\cl{A}{S_{\mathcal{A}}}$ is a vector space.
}
\end{enumerate}
\end{dfn}
\begin{lemma}\label{msr-rep}
Let $\mathcal{A}$ be a $\sigma$-algebra of subsets of $X$, $B$ a lattice complete algebra of $\mathcal{A}$-measurable functions for 
which every $\chi_Y$, $Y\in\mathcal{A}$ is bounded above in $B$ and $\map{L}{B}{\reals}$ a positive functional. Then 
\begin{enumerate}
	\item{\label{msr-rep-1}
	$L$ extends positively to a linear functional $\map{\bar{L}}{B+S_{\mathcal{A}}}{\reals}$ (and also to $B+\cl{B}{S_{\mathcal{A}}}$);
	}
	\item{\label{msr-rep-2}
	the function $\rho_L(f)=L(|f|)$ (resp. $\rho_{\bar{L}}=\bar{L}(|f|)$) defines a seminorm on $B$ (resp. on $B+S_{\mathcal{A}}$);
	}
	\item{\label{msr-rep-3}
	every function $f\in B$ can be approximated by elements of $\cl{B}{S_{\mathcal{A}}}$ from below, in $\rho_{\bar{L}}$;
	}
	\item{\label{msr-rep-4}
	if $B$ is dense in $(B+S_{\mathcal{A}},\rho_{\bar{L}})$, then there exists a measure $\mu$ on $(X,\mathcal{A})$ such that
	\[
		\forall f\in\Psd{B}{X}\quad L(f)=\int f~d\mu.
	\]
	}
\end{enumerate}
\end{lemma}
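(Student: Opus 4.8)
The plan is to take the four items in order: \eqref{msr-rep-1} is an application of Theorem \ref{chq}, items \eqref{msr-rep-2} and \eqref{msr-rep-3} are essentially bookkeeping once $\bar L$ and the dyadic approximation of $\mathcal A$-measurable functions are in hand, and \eqref{msr-rep-4} carries the real content. For \eqref{msr-rep-1} I would invoke Theorem \ref{chq} with $V=\reals^X$, $W=B$, and $C=\Psd{\reals^X}{X}$, the convex cone of everywhere non-negative real functions on $X$; positivity of $L$ says exactly $L(W\cap C)\ge 0$. The point is that $B+S_{\mathcal A}$, and even $B+\cl{B}{S_{\mathcal A}}$, lies inside $W_C$: if $g$ satisfies $|g|\le a$ with $a\in B$ (which holds for any element of $\cl{B}{S_{\mathcal A}}$ by definition, and for any element of $S_{\mathcal A}$ since each $\chi_Y$ is bounded above in $B$), then $a\pm g\in C$ while $\mp a\in W$, so $\pm g\in C+W$. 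Theorem \ref{chq} then produces a positive extension to $W_C$, which we restrict to $B+S_{\mathcal A}$ (resp. to $B+\cl{B}{S_{\mathcal A}}$) to get $\bar L$. I would also record here that $|f|\pm f\in C$ forces $|\bar L(f)|\le\bar L(|f|)$, so $\bar L$ is automatically $1$-Lipschitz for $\rho_{\bar L}$; this is used repeatedly below.

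For \eqref{msr-rep-2}: lattice completeness of $B$ gives $|f|\in B$ whenever $f\in B$ (take $g=f$ in the definition of $\check{B}$), so $\rho_L$ is well defined; $\rho_L\ge 0$ by positivity, absolute homogeneity is immediate, and the triangle inequality is the assertion $L(|f|+|g|-|f+g|)\ge 0$, which holds because $|f|+|g|-|f+g|$ is a non-negative element of $B$ and $L$ is positive. The same computation applies to $\rho_{\bar L}$ on $B+S_{\mathcal A}$, once one notes that this space is stable under $|\cdot|$. For \eqref{msr-rep-3}, write $f=f^{+}-f^{-}$ to reduce to the case $f\ge 0$ in $B$, and take the usual dyadic simple functions $s_N=\sum_{k}\tfrac{k}{2^N}\chi_{\{k2^{-N}\le f<(k+1)2^{-N}\}}+N\chi_{\{f\ge N\}}\in S_{\mathcal A}$, so that $0\le s_N\le f$ and $s_N\uparrow f$ pointwise. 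The telescoping identity $f=\sum_{N\ge 1}(s_N-s_{N-1})$ exhibits $f$ itself as an element of $\cl{B}{S_{\mathcal A}}$ (it is pointwise dominated by $f\in B$), so $B\subseteq\cl{B}{S_{\mathcal A}}$ and the $s_N\in S_{\mathcal A}\subseteq\cl{B}{S_{\mathcal A}}$ furnish approximants of $f$ from below; the $\rho_{\bar L}$-convergence $\rho_{\bar L}(f-s_N)=\bar L(f-s_N)\to 0$ is then read off from the extension to $B+\cl{B}{S_{\mathcal A}}$ constructed in \eqref{msr-rep-1} (for unbounded $f$ one additionally checks $\bar L(f\chi_{\{f\ge N\}})\to 0$, which the same series argument supplies).

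For \eqref{msr-rep-4} I would set $\mu(Y):=\bar L(\chi_Y)$ for $Y\in\mathcal A$. Nonnegativity and finite additivity are immediate from positivity and linearity of $\bar L$ together with $\chi_{Y\sqcup Z}=\chi_Y+\chi_Z$, and $\mu(X)=\bar L(\chi_X)<\infty$. The delicate point — which I expect to be the main obstacle — is that the abstract positive extension $\bar L$ need not a priori respect countable sums, so countable additivity of $\mu$, i.e. $\bar L(\chi_{Z_n})\to 0$ whenever $Z_n\downarrow\emptyset$ in $\mathcal A$, has to be extracted from the hypothesis that $B$ is dense in $(B+S_{\mathcal A},\rho_{\bar L})$. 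My plan here is: approximate each $\chi_{Z_n}$ to within $\varepsilon 2^{-n}$ in $\rho_{\bar L}$ by some $b_n\in B$, truncated so that $0\le b_n\le 1$ (possible because $\chi_X$ bounded above in $B$ plus lattice completeness puts the constant $1$ in $B$); then $|\bar L(\chi_{Z_n})-\bar L(b_n)|\le\varepsilon 2^{-n}$, and since $\bar L(|\chi_{Z_n}-\chi_{Z_m}|)=|\mu(Z_n)-\mu(Z_m)|$ the sequences $(\chi_{Z_n})$ and $(b_n)$ are $\rho_{\bar L}$-Cauchy; combining this with \eqref{msr-rep-3} applied to the $b_n$ should force $\lim_n\bar L(\chi_{Z_n})=0$. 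Once $\mu$ is a genuine measure, the representation is routine: for $f\in\Psd{B}{X}$ take the dyadic $s_N\uparrow f$ of \eqref{msr-rep-3}; then $\int s_N\,d\mu=\bar L(s_N)$ by the definition of $\mu$ and linearity, $\int s_N\,d\mu\uparrow\int f\,d\mu$ by monotone convergence, while $|L(f)-\bar L(s_N)|=|\bar L(f-s_N)|\le\rho_{\bar L}(f-s_N)\to 0$ by \eqref{msr-rep-3} and the $1$-Lipschitz bound from \eqref{msr-rep-1}; hence $L(f)=\int f\,d\mu$.
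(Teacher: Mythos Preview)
Your treatment of \eqref{msr-rep-1}--\eqref{msr-rep-3} is essentially the paper's: both invoke Theorem~\ref{chq} (you take the ambient space $V=\reals^X$ and check $B+S_{\mathcal A}\subseteq W_C$, while the paper takes $V=B+S_{\mathcal A}$ and checks $V=B+C$, but the content is the same), and the seminorm verification and simple-function approximation are routine in both.

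The divergence, and the gap, is in \eqref{msr-rep-4} at countable additivity. You correctly flag this as the crux, but your proposed route---approximate $\chi_{Z_n}$ in $\rho_{\bar L}$ by truncated $b_n\in B$, note that $(\chi_{Z_n})$ and hence $(b_n)$ are $\rho_{\bar L}$-Cauchy, then ``combining this with \eqref{msr-rep-3} applied to the $b_n$ should force $\lim_n\bar L(\chi_{Z_n})=0$''---does not close. A decreasing nonnegative sequence being Cauchy only gives convergence to some $\alpha\ge 0$; nothing in your sketch rules out $\alpha>0$. Invoking \eqref{msr-rep-3} on $b_n$ merely produces simple functions $\rho_{\bar L}$-close to $b_n$, hence $\rho_{\bar L}$-close to $\chi_{Z_n}$, which circles back to where you started. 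The density hypothesis, as you are deploying it, does not visibly distinguish a merely finitely additive $\bar L$ from a countably additive one.

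The paper does \emph{not} use density for countable additivity. It instead exploits the positive extension of $\bar L$ to $B+\cl{B}{S_{\mathcal A}}$ from \eqref{msr-rep-1}: for disjoint $(Y_i)$ with union $Y$, each tail $\sum_{i>n}\chi_{Y_i}$ lies in $\cl{B}{S_{\mathcal A}}$ (it is bounded by $\chi_Y$, which is bounded above in $B$), so by finite linearity
\[
\bar L(\chi_Y)=\sum_{i\le n}\bar L(\chi_{Y_i})+\bar L\Bigl(\sum_{i>n}\chi_{Y_i}\Bigr),
\]
and the argument proceeds directly with these tail values. The density hypothesis enters only afterward, in the representation step, to match $\sup\{\bar L(\phi):\phi\in S_{\mathcal A},\ \phi\le f\}$ with $\sup\{L(g):g\in B,\ g\le f\}=L(f)$. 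Your monotone-convergence finish for the representation is fine and close in spirit to the paper's sup argument, but you should reroute the countable-additivity step through the extension to $B+\cl{B}{S_{\mathcal A}}$ rather than through density.
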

\begin{proof}
\eqref{msr-rep-1}
Let $V=B+S_{\mathcal{A}}$ and $C=\Psd{V}{X}$. It suffices to show that $V=B+C$, then the conclusion follows from theorem \ref{chq}.
A typical element of $V$ is of the form $b+\sum_{i=1}^n\lambda_i\chi_{Y_i}$, $b\in B$, $Y_i\in\mathcal{A}$ and $\lambda_i\in\reals$, 
for $i=1,\dots,n$. By assumption, for each $i=1,\dots,n$, there is $b_i\in B$ with $\chi_{Y_i}\leq b_i$. 
Take $b'=\sum_{i=1}^n|\lambda_i|b_i$, then $b'\pm\sum_{i=1}^n\lambda_i\chi_{Y_i}\in C$ and
\[
	b\pm\sum_{i=1}^n\lambda_i\chi_{Y_i}=(b-b')+(b'\pm\sum_{i=1}^n\lambda_i\chi_{Y_i})\in B+C.
\]
Therefore $V=B+C$ as desired. Now let $V=B+\cl{B}{\mathcal{A}}$ and $C=\Psd{V}{X}$. For $b\pm\sum_{i=1}^{\infty}\lambda_i\chi_{X_i}$,
there exists $b'\in B$ such that $|\sum_{i=1}^{\infty}\lambda_i\chi_{X_i}|<b'$, so $b'\pm\sum_{i=1}^{\infty}\lambda_i\chi_{X_i}\in C$
and
\[
	b+\sum_{i=1}^{\infty}\lambda_i\chi_{X_i}=(b-b')+(b'\pm\sum_{i=1}^{\infty}\lambda_i\chi_{X_i})\in B+C,
\]
and the conclusion follows from theorem \ref{chq}.

\eqref{msr-rep-2}
This is clear. Since $B$ is lattice complete, it contains absolute value of its elements 
(note that absolute value of an $\mathcal{A}$-measurable function is $\mathcal{A}$-measurable as well). Therefore by positivity of 
$L$ we see $\rho_L(rf)=|r|L(|f|)=|r|\rho_L(f)$ and $\rho_L(f+g)=L(|f+g|)\leq L(|f|+|g|)=L(|f|)+L(|g|)=\rho_L(f)+\rho_L(g)$.

\eqref{msr-rep-3}
Fix $f\in B$ and $\epsilon>0$. First suppose that $f(X)$ is bounded. This means that there are $a<b\in\reals$ such that for every
$x\in X$, $a\leq f(x)<b$. Let $n\ge1$ be an integer and define
\[
	X_k:=f^{-1}\left([a+(k-1)\frac{b-a}{n},a+k\frac{b-a}{n})\right),\quad k=1,\dots,n.
\]
Each $X_k$ is $\mathcal{A}$-measurable. Take $\phi=\frac{b-a}{n}\sum_{k=1}^n(k-1)\chi_{X_k}$, this is clear that $\phi(x)\leq f(x)$ 
for all $x\in X$ and $0\leq f(x)-\phi(x)<\frac{b-a}{n}$. Therefore $0\leq\bar{L}(f-\phi)\leq\frac{b-a}{n}\bar{L}(1)$. Choose $n$
big enough such that $\frac{b-a}{n}\bar{L}(1)<\epsilon$, we will have $\rho_{\bar{L}}(f-\phi)=\bar{L}(f-\phi)<\epsilon$.

Now take an arbitrary $f\in B$ and let $\{[a_i,b_i)\}_{i\in\naturals}$ be a partition of $\reals$ and $\epsilon>0$. Since $B$ is
lattice complete and $|\chi_{[a_i,b_i)}f|\leq|f|$, we see that $\chi_{[a_i,b_i)}f\in B$.
For every $i\in\naturals$, there exists $\phi_i\in S_{\mathcal{A}}$ such that $\rho_{\bar{L}}(\chi_{[a_i,b_i)}f-\phi_i)<2^{-i}\epsilon$
and $\phi_i\leq\chi_{[a_i,b_i)}f$. Let $\psi=\sum_{i=1}^{\infty}\phi_i$, then $|\psi|\leq|f|$ on $X$, so $\psi\in\cl{B}{S_{\mathcal{A}}}$
and 
\[
\begin{array}{lcl}
	\rho_{\bar{L}}(f-\psi) & = & \rho_{\bar{L}}(\sum_{i=1}^{\infty}\chi_{[a_i,b_i)}f-\phi_i)\\
		& \leq & \sum_{i=1}^{\infty}\rho_{\bar{L}}(\chi_{[a_i,b_i)}f-\phi_i)\\
		& < & \sum_{i=1}^{\infty}2^{-i}\epsilon\\
		& = & \epsilon,
\end{array}
\]
as desired.

\eqref{msr-rep-4}
Let $\bar{L}$ be the functional from part \eqref{msr-rep-1} and $\{Y_n\}_n\subset\mathcal{A}$ a sequence of pairwise disjoint sets.
Then $Y=\cup_{n}Y_n\in\mathcal{A}$ and $\chi_Y=\sum_{n=1}^{\infty}\chi_{Y_n}$. Since $\bar{L}$ is positive, 
$0\leq\sum_1^n\bar{L}(\chi_{Y_i})+\bar{L}(\sum_{n+1}^{\infty}\chi_{Y_i})=\bar{L}(\chi_Y)$. 
The sequence $(\bar{L}(\sum_{n+1}^{\infty}\chi_{Y_i}))_n$ is positive and decreasing and hence convergent. Thus for any $\epsilon>0$, 
there exists $N>0$ such that for any $n>m>N$,
\[
\begin{array}{lcl}
	\bar{L}(\sum_{m+1}^{\infty}\chi_{Y_i})-\bar{L}(\sum_{n+1}^{\infty}\chi_{Y_i}) & = & \bar{L}(\sum_{m+1}^{n}\chi_{Y_i}) \\
	 & = & \sum_{m+1}^n\bar{L}(\chi_{Y_i})\\
	 & < & \frac{\epsilon}{2},
\end{array}
\]
hence $\sum_{m+1}^{\infty}\bar{L}(\chi_{Y_i})<\epsilon$. Therefore $\lim_{n\rightarrow\infty}\sum_{n+1}^{\infty}\bar{L}(\chi_{Y_i})=0$.
So the set function defined on $\mathcal{A}$ by $\mu(A)=\bar{L}(\chi_Y)$ is indeed a measure on $\mathcal{A}$.

Now, for any $\mu$-measurable function $f\ge0$,
\[
\begin{array}{lcl}
	\int f~d\mu & = & \sup\{\int\phi~d\mu:\phi\in S_{\mathcal{A}}\textrm{ and }\phi\leq f\}\\
	 & = & \sup\{\bar{L}(\phi):\phi\in S_{\mathcal{A}}\textrm{ and }\phi\leq f\}.
\end{array}
\]
By density of $B$ in $(B+S_{\mathcal{A}},\rho_{\bar{L}})$ and applying part \eqref{msr-rep-3}, for any $f\in B$ we have:
\[
	\sup\{\bar{L}(\phi):\phi\in S_{\mathcal{A}}\textrm{ and }\phi\leq f\} = 
	\sup\{L(g)\in B:g\in B\textrm{ and }g\leq f\}=L(f),
\]
therefore $L(f)=\int f~d\mu$.
\end{proof}
\begin{prop}\label{ext-alg}
If $A$ is an algebra then every positive functional $\map{L}{A}{\reals}$ admits a positive extension to $\check{A}$.
\end{prop}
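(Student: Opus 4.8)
The plan is to read this off Theorem~\ref{chq}. I would apply that theorem with $V=\check{A}$, $W=A$, and $C=\Psd{\check{A}}{X}$, the convex cone of functions in $\check{A}$ that are non-negative on $X$. The hypothesis $L(W\cap C)\ge 0$ is then exactly $L(\Psd{A}{X})\ge 0$, i.e.\ the positivity of $L$. Theorem~\ref{chq} produces a linear extension $\tilde L$ of $L$ to $W_C=\{v\in\check{A}:\pm v\in C+A\}$ with $\tilde L(W_C\cap C)\ge 0$, and since $W_C\subseteq\check{A}$ automatically and $W_C\cap C=\Psd{\check{A}}{X}$ as soon as $W_C=\check{A}$, the whole proof reduces to the single claim $W_C=\check{A}$.

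So the substantive step — and the one I expect to be the main obstacle — is to show that every $f\in\check{A}$ already lies in $W_C$, i.e.\ that $f$ and $-f$ can each be written as (a non-negative element of $\check{A}$) $+$ (an element of $A$). Fix $g\in A$ with $|f|\le|g|$. Here is where the algebra structure is essential: since $A$ is an algebra, $g^{2}\in A$, and the elementary inequality $|g|\le 1+g^{2}$ lets me dominate $|f|$ by $p:=1+g^{2}\in A$ (using that $A$ contains the constant function $1$; $\check A$ then contains the constants as well). Now $p\pm f\ge 0$ on $X$, and since $0\le p\pm f\le 2p$ with $2p\in A$, both $p+f$ and $p-f$ belong to $\check{A}$, hence to $C$. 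The identities $f=(p+f)+(-p)$ and $-f=(p-f)+(-p)$ exhibit $\pm f\in C+A$, so $f\in W_C$. Thus $W_C=\check{A}$, and Theorem~\ref{chq} gives a positive extension $\tilde L$ on $\check{A}$.

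Two points deserve a line of care. First, one should check that $\check{A}$ is a genuine $\reals$-vector space (indeed an algebra), so that Theorem~\ref{chq} applies with $V=\check{A}$: if $|f_i|\le|g_i|$ with $g_i\in A$, then $|f_1+f_2|\le|g_1|+|g_2|\le 2+g_1^{2}+g_2^{2}\in A$ and $|f_1f_2|\le|g_1g_2|\in A$. Second, the only genuinely non-formal move is the passage $|f|\le|g|\ \Rightarrow\ |f|\le p\in A$, which is precisely the place where ``algebra'' (as opposed to merely ``subspace'') is used, since one cannot in general dominate $|g|$ pointwise by an element of a mere subspace. Once $W_C=\check{A}$ is established, everything else is immediate from Theorem~\ref{chq}.
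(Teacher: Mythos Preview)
Your proof is correct and follows essentially the same route as the paper: both arguments dominate $|f|$ by a non-negative element of $A$ via the algebra trick $|g|\le c(1+g^{2})$ (the paper uses $c=\tfrac12$, you use $c=1$), then write $\pm f$ as an element of $A$ plus an element of $\Psd{\check{A}}{X}$ and invoke Theorem~\ref{chq}. Your version is more explicit in checking that $p\pm f\in\check{A}$ and that $\check{A}$ is a vector space, and you correctly flag the tacit unitality assumption needed for $1+g^{2}\in A$.
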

\begin{proof}
Note that for $g\in A$, $|g|\leq\frac{g^2+1}{2}$. Therefore, for every $f\in\check{A}$ there exists $g\in\Psd{A}{X}$ such that 
$|f|\leq g$ and hence $f=-g+(g+f)\in A+\Psd{\check{A}}{X}$. Applying theorem \ref{chq} the conclusion follows.
\end{proof}
\begin{thm}\label{gen-chq}
Let $\mathcal{A}$ a $\sigma$-algebra of subsets of $X$, $A\subseteq\reals^X$ an algebra of $\mathcal{A}$-measurable functions, 
$\map{L}{A}{\reals}$, a positive functional and $B\subseteq\check{A}$ a lattice complete subspace, such that
\begin{enumerate}
	\item{
	$A$ is $B$-adapted and,
	}
	\item{
	$B$ is $\rho_{\tilde{L}}$ dense in $B+S_{\mathcal{A}}$.
	}
\end{enumerate}
Then there is a measure $\mu$ on $X$ such that $(X,\mathcal{A},\mu)$ is a measure space and
\[
	\forall f\in\check{A}\quad \tilde{L}(f)=\int f~d\mu.
\]
\end{thm}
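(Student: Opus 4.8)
The plan is to deduce Theorem \ref{gen-chq} by assembling the pieces already established: Proposition \ref{ext-alg} to pass from $A$ to $\check{A}$, the $B$-adaptedness hypothesis to transfer positivity-preserving extensions down to $B$, and Lemma \ref{msr-rep}\eqref{msr-rep-4} to produce the measure. First I would invoke Proposition \ref{ext-alg} to extend the positive functional $L$ on the algebra $A$ to a positive functional $\tilde{L}$ on the lattice hull $\check{A}$; since $B\subseteq\check{A}$, restricting $\tilde{L}$ to $B$ gives a positive functional on $B$. By hypothesis $B$ is lattice complete, and one checks using the $B$-adaptedness of $A$ (together with $B\subseteq\check{A}$) that every $\chi_Y$ with $Y\in\mathcal{A}$ is bounded above by an element of $B$: indeed $\chi_Y$ is dominated by the constant $1$, which by adaptedness is dominated with respect to $B$ by some $f\in A$, and then by lattice completeness a suitable multiple of $|f|$ together with a $B$-correction majorizes $\chi_Y$. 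This verifies the standing hypotheses of Lemma \ref{msr-rep} for the functional $\tilde{L}|_B$.

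Next I would apply Lemma \ref{msr-rep} to $B$ and $\tilde{L}|_B$. Part \eqref{msr-rep-1} extends $\tilde{L}|_B$ positively to $B+S_{\mathcal{A}}$; part \eqref{msr-rep-2} shows $\rho_{\tilde{L}}$ is a seminorm; and the density hypothesis (2) of the theorem is exactly the hypothesis needed to invoke part \eqref{msr-rep-4}, yielding a measure $\mu$ on $(X,\mathcal{A})$ with $\tilde{L}(f)=\int f\,d\mu$ for all $f\in\Psd{B}{X}$, hence for all $f\in B$ by linearity and lattice completeness (write $f=f^+-f^-$). So $\tilde{L}$ and $f\mapsto\int f\,d\mu$ agree on all of $B$.

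The remaining task, and the one I expect to be the main obstacle, is to upgrade the agreement from $B$ to all of $\check{A}$. Here is where $B$-adaptedness does the real work: given $f\in\check{A}$, pick $g\in A$ with $|f|\leq|g|$; by adaptedness there is $h\in A$ with $g\dm{B}h$, so for every $\epsilon>0$ there is $b_\epsilon\in B$ with $|g|\leq\epsilon|h|+b_\epsilon$, whence $|f|\leq\epsilon|h|+b_\epsilon$. One shows $|h|$ and $b_\epsilon$ lie in $\check{A}$ (or at least have finite $\tilde{L}$-absolute value) and that $\int|h|\,d\mu\le\tilde{L}(|h|)<\infty$, using that $\tilde{L}$ is positive and the agreement already known on $B$. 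Then both $\tilde{L}(f)$ and $\int f\,d\mu$ are squeezed: $|\tilde{L}(f)-\int f\,d\mu|$ is controlled by $\epsilon\,\tilde{L}(|h|)$ plus the error coming from the $B$-part, which vanishes because $\tilde{L}$ and $\int\cdot\,d\mu$ coincide on $B$. Letting $\epsilon\to0$ forces $\tilde{L}(f)=\int f\,d\mu$. The delicate points to be careful about are the finiteness of $\int|h|\,d\mu$ (so the squeeze is not vacuous) and ensuring the approximating elements $b_\epsilon$ can be chosen nonnegative, which follows from lattice completeness of $B$ by replacing $b_\epsilon$ with $b_\epsilon^+$.
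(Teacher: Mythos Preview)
Your overall architecture matches the paper's exactly: extend $L$ to $\check{A}$ via Proposition~\ref{ext-alg}, apply Lemma~\ref{msr-rep}\eqref{msr-rep-4} to $\tilde{L}|_B$ to obtain a measure $\mu$ with $\tilde{L}=\int\cdot\,d\mu$ on $B$, and then use $B$-adaptedness to push the equality up to all of $\check{A}$.

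The gap is in your final squeeze. From $|f|\le\epsilon|h|+b_\epsilon$ alone you \emph{cannot} deduce that $|\tilde{L}(f)-\int f\,d\mu|$ is controlled by $\epsilon\,\tilde{L}(|h|)$ plus a vanishing $B$-error. Applying the two positive operators $\tilde{L}$ and $\int\cdot\,d\mu$ to $-\epsilon|h|-b_\epsilon\le f\le\epsilon|h|+b_\epsilon$ only traps $\tilde{L}(f)$ and $\int f\,d\mu$ in intervals of width $2\epsilon\,\tilde{L}(|h|)+2\tilde{L}(b_\epsilon)$ and $2\epsilon\int|h|\,d\mu+2\int b_\epsilon\,d\mu$ respectively; the fact that $\tilde{L}(b_\epsilon)=\int b_\epsilon\,d\mu$ does not help, because $b_\epsilon$ does not tend to $0$ as $\epsilon\to0$ (quite the opposite: $b_\epsilon$ must majorize $|g|-\epsilon|h|$, so $\tilde{L}(b_\epsilon)$ stays bounded away from zero). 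The difference $\tilde{L}(f)-\int f\,d\mu$ is therefore not squeezed.

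What is missing is the one-sided inequality $\int f\,d\mu\le\tilde{L}(f)$ for every $f\in\Psd{\check{A}}{X}$, and this is precisely the step the paper makes explicit. For $f\ge0$ one has, by the definition of the integral and then by the $\rho_{\tilde{L}}$-density of $B$ in $B+S_{\mathcal{A}}$,
\[
\int f\,d\mu=\sup\{\bar{L}(\phi):\phi\in S_{\mathcal{A}},\ \phi\le f\}=\sup\{\tilde{L}(b):b\in B,\ b\le f\}\le\tilde{L}(f).
\]
Once $T:=\tilde{L}-\int\cdot\,d\mu$ is known to be nonnegative on $\Psd{\check{A}}{X}$ and to vanish on $B$, adaptedness gives, for $g\ge0$ with dominator $f$ and $h\in B$, the chain $0\le T(g)\le T(\epsilon f+h)=\epsilon\,T(f)$, and letting $\epsilon\to0$ finishes the proof. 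Your sentence ``$\int|h|\,d\mu\le\tilde{L}(|h|)$ \ldots\ using that $\tilde{L}$ is positive and the agreement already known on $B$'' gestures at the right inequality, but positivity of $\tilde{L}$ and agreement on $B$ are not sufficient by themselves; you must invoke the density hypothesis a second time here to pass from the sup over simple functions to the sup over $B$.
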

\begin{proof}
Since $L$ is positive by theorem \ref{chq} and proposition \ref{ext-alg}, it extends positively to 
$\map{\tilde{L}}{\check{A}}{\reals}$. Abusing the notations, we also denote the extension of $\tilde{L}|_B$ to 
$B+S_{\mathcal{A}}$ by $\tilde{L}$. Since $B$ is dense in $(B+S_{\mathcal{A}},\rho_{\tilde{L}})$, by lemma 
\ref{msr-rep}, there exists a measure $\mu$ such that $(X,\mathcal{A},\mu)$ is a measure space and 
$\tilde{L}(f)=\int f~d\mu$ for all $f\in B+S_{\mathcal{A}}$. For every $f\in\check{A}$,
\[
\begin{array}{lcl}
	\int f~d\mu & = & \sup\{\int\phi~d\mu~:~\phi\in\mathcal{A}\textrm{ and }\phi\leq f\}\\
	 & = & \sup\{\int b~d\mu~:~b\in B\textrm{ and }b\leq f\}\\
	 & \leq & \tilde{L}(f).
\end{array}
\]
Let $T(f)=\tilde{L}(f)-\int f~d\mu$, then $T|_B=0$. Since $A$ is $B$-adapted, for every $g\in\Psd{\check{A}}{X}$ there exists 
$f\in\Psd{\check{A}}{X}$ such that 
\[
	\forall\epsilon>0~\exists h\in B\quad g\leq\epsilon f+h.
\]
Therefore $0\leq T(g)\leq\epsilon T(f)+T(h)=\epsilon T(f)$. Letting $\epsilon\rightarrow0$, we get $T(g)=0$ for all $g\in\check{A}$
and hence for all $g\in\check{A}$, $\tilde{L}(g)=\int g~d\mu$.
\end{proof}
\begin{rem}\label{sbspc-int-rep}
In theorem \ref{gen-chq}, suppose that $A\subseteq V\subseteq\reals^X$ are just vector spaces and $\map{L}{V}{\reals}$ 
a positive linear functional and there is a lattice complete subspace $B\subseteq\check{A}$ which is dense in 
$(B+S_{\mathcal{A}},\rho_L)$ and for every $g\in A$ there exists $f\in V$ such that for all $\epsilon>0$, $|g|\leq\epsilon|f|+h$ 
for some $h\in B$, then the same argument proves that $L$ admits an integral representation on $A$.
\end{rem}
\section{Algebra of continuous functions}\label{App2Cnt}
Perhaps the most interesting case happens when the function algebra is considered to be a subalgebra of continuous functions
with respect to a given topology on $X$. A natural question in this case is when the resulting measure is Radon or Borel?
The main motivation to study representation of a positive functional on a subalgebra of continuous functions to us is the
classical moment problem, where the subalgebra is taken to be polynomials on a subset of $\reals^n$. In this section we 
restate a slightly more general version of Haviland's solution for multidimensional moment problem.
\begin{crl}[Choquet \cite{chq}]\label{hvlnd1}
Let $X$ be a locally compact Hausdorff space, $A\subseteq\Cnt{}{X}$ a $\Cnt{c}{X}$-adapted space which separates points of $X$
and $\map{L}{A}{\reals}$ a positive linear functional. Then $L$ is representable via a positive Borel measure on $X$.
\end{crl}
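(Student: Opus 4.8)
The plan is to reduce to the compactly supported case and then bootstrap, following the four-step scheme used for Haviland's theorem in the introduction, now with $K=X$ and $\rx$ replaced by $A$. In outline: extend $L$ to a positive functional $\tilde L$ whose domain contains $\Cnt{c}{X}$; apply the Riesz--Markov--Kakutani theorem (Theorem \ref{RMK}), or equivalently Lemma \ref{msr-rep}\eqref{msr-rep-4} together with Theorem \ref{gen-chq}/Remark \ref{sbspc-int-rep} with $\mathcal{A}$ the Borel $\sigma$-algebra of $X$ and $B=\Cnt{c}{X}$, to obtain a positive Borel (Radon) measure $\mu$ on $X$ representing $\tilde L$ on $\Cnt{c}{X}$; then use the $\Cnt{c}{X}$-adaptedness of $A$ to conclude that $\mu$ already represents $L$ on $A$.

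Since $A$ separates points, at most one point $z_0$ can be a common zero of all of $A$, and no element of $A$ --- hence $L$ --- sees the mass a measure assigns to $z_0$; so, deleting $z_0$ (which leaves a locally compact Hausdorff space and preserves the hypotheses, using that elements of $A$ vanish at $z_0$), we may assume $A$ has no common zero. Then, for each compact $K\subseteq X$, point-separation and local compactness produce $a_1,\dots,a_m\in A$ with $\sum_i a_i^2$ bounded below by a positive constant on $K$, and hence, inside the subalgebra $\widehat{A}$ generated by $A$, every nonnegative $\phi\in\Cnt{c}{X}$ is dominated by an element of $\widehat{A}$; thus $\Cnt{c}{X}\subseteq\check{\widehat{A}}$. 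By Proposition \ref{ext-alg} and Theorem \ref{chq}, $L$ extends positively from $A$, first to $\widehat{A}$ and then to the lattice hull $\check{\widehat{A}}$, to a positive functional $\tilde L$ with $\Cnt{c}{X}$ in its domain (the point that needs care being the hypothesis of Theorem \ref{chq} for the passage from $A$ to $\widehat{A}$, which is immediate when $A$ is itself an algebra).

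Now set $T(h)=\tilde L(h)-\int h\,d\mu$. For any lower semicontinuous $u\ge0$ in the domain of $\tilde L$, inner regularity of the Radon measure $\mu$ and positivity of $\tilde L$ give
\[
\int u\,d\mu=\sup\{\int\psi\,d\mu:\psi\in\Cnt{c}{X},\ 0\le\psi\le u\}=\sup\{\tilde L(\psi):\psi\in\Cnt{c}{X},\ 0\le\psi\le u\}\le\tilde L(u);
\]
applying this to the continuous nonnegative functions $g^{+},g^{-},|f|$ built from $g,f\in A$ (all of which lie in $\check{\widehat{A}}$) shows $T\ge0$ on them, that each $g\in A$ is $\mu$-integrable, and that $T$ vanishes on $\Cnt{c}{X}$. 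Finally, given $g\in A$, pick $f\in A$ with $g\dm{\Cnt{c}{X}}f$: for each $\epsilon>0$ there is $h_\epsilon\in\Cnt{c}{X}$ with $g^{+}\le|g|\le\epsilon|f|+h_\epsilon$, so $0\le T(g^{+})\le\epsilon\,T(|f|)+T(h_\epsilon)=\epsilon\,T(|f|)$; letting $\epsilon\to0$ yields $T(g^{+})=0$, and symmetrically $T(g^{-})=0$, whence $L(g)=\tilde L(g)=\int g\,d\mu$ for all $g\in A$.

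The step I expect to be the real obstacle is extending $L$ past $A$ so as to reach $\Cnt{c}{X}$ --- that is, establishing $\Cnt{c}{X}\subseteq\check{\widehat{A}}$ and verifying Theorem \ref{chq} applies --- since this is where the topology of $X$ genuinely enters (local compactness, a Stone--Weierstrass-type argument on compact sets, and, on the alternative route, Urysohn's lemma for the $\rho_{\tilde L}$-density of $\Cnt{c}{X}$ in $\Cnt{c}{X}+S_{\mathcal{A}}$ required by Lemma \ref{msr-rep}), and where the subspace-versus-algebra point and the common-zero reduction must be handled with care. Once the positive extension $\tilde L$ is in hand, both the passage to $\mu$ and the transfer from $\Cnt{c}{X}$ back to $A$ via adaptedness are routine.
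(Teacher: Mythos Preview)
Your approach is the paper's: show that $\Cnt{c}{X}$ lies in the lattice hull of (the algebra generated by) $A$, extend $L$ positively to that hull via Proposition~\ref{ext-alg}/Theorem~\ref{chq}, apply Riesz--Markov--Kakutani to get $\mu$, and then use $\Cnt{c}{X}$-adaptedness to pull the integral representation back to $A$. The paper dispatches this last step by citing Theorem~\ref{gen-chq}; you reprove the relevant part by hand with $T=\tilde L-\int(\cdot)\,d\mu$, which is exactly the computation inside that theorem's proof.

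Where you differ is in care rather than strategy. The paper's proof reads ``since $A$ separates points of $X$, for every $x\in X$ there exists $g_x\in\Psd{A}{X}$ such that $|g(x)|<g_x(x)$'', and then invokes Theorem~\ref{gen-chq}; both steps tacitly need $A$ to be an \emph{algebra} (so that squares are available and Proposition~\ref{ext-alg} applies). You make this explicit by passing to the generated algebra $\widehat A$, flagging that the positive extension $A\to\widehat A$ is the nontrivial point when $A$ is merely a subspace, and you also insert the common-zero reduction the paper omits. Under the assumption that $A$ is already an algebra (as in Theorem~\ref{gen-chq}), both arguments are complete and coincide; for a bare subspace, yours isolates precisely the step that needs an additional hypothesis.
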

\begin{proof}
Take $\mathcal{A}=\mathcal{B}(X)$, the Borel algebra of $X$. Let $g\in\Cnt{c}{X}$, we show that $g\in\check{A}$. Since $A$ separates
points of $X$, for every $x\in X$, there exists $g_x\in\Psd{A}{X}$ such that $|g(x)|<g_x(x)$ holds on an open neighbourhood $V_x$ of $x$.
The family $(V_x)_{x\in X}$ forms an open cover for $\supp(g)$ which is compact, hence there exist $x_1,\dots,x_n$, such that
$\supp(g)\subseteq\cup_i^nV_{x_i}$.
Therefore $|g|<\sum_1^ng_{x_i}$ and $g\in\check{A}$. Thus $\tilde{L}$ is positive on $\Cnt{c}{X}$ and by Riesz--Markov--Kakutani 
representation theorem, there exists a positive Borel measure $\mu$ on $X$ such that $\tilde{L}(f)=\int f~d\mu$ on $\Cnt{c}{X}$ and 
since $A$ is $\Cnt{c}{X}$-adapted, theorem \ref{gen-chq} implies that $\tilde{L}(f)=\int f~d\mu$ for all $f\in\check{A}$.
\end{proof}
\begin{rem}
In Corollary \ref{hvlnd1}, one could easily show that for every compact subset $K$ of $X$, $\chi_K$ is a uniform limit of a 
sequence in $\Cnt{c}{X}$ and hence $\Cnt{c}{X}$ is dense in $\Cnt{c}{X}+S_{\mathcal{B}(X)}$. Therefore the measure obtained 
from lemma \ref{msr-rep}(\ref{msr-rep-4}) has to be a Borel measure.
\end{rem}
\begin{crl}[Marshall \cite{Mar02}]\label{Mar-Hav}
Suppose $A$ is an $\reals$-algebra, $X$ is a  Hausdorff space, and $\map{\hat{~}}{A}{\Cnt{}{X}}$ is an $\reals$-algebra homomorphism such 
that for some $p\in A$, $\hat{p}\ge0$ on $X$, the set $X_i=\hat{p}^{-1}([0,i])$ is compact for each $i=1,2,\cdots$. Then for every linear 
functional $\map{L}{A}{\reals}$ satisfying $L(\Psd{\hat{A}}{X})\subseteq[0,\infty)$, there exists a Radon measure $\mu$ on $X$ such that 
\[
	\forall a\in A \quad L(a)=\int_X\hat{a}~d\mu.
\]
\end{crl}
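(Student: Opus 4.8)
The plan is to transfer the problem to the image algebra $\hat A=\{\hat a:a\in A\}\subseteq\Cnt{}{X}$ and then run the argument behind Corollary \ref{hvlnd1}. First I would factor $L$ through the hat homomorphism. If $\hat a=0$ then $\hat a$ and $-\hat a=\widehat{-a}$ both lie in $\Psd{\hat A}{X}$, so the hypothesis forces $L(a)\ge0$ and $L(-a)\ge0$, i.e. $L(a)=0$; consequently $\bar L(\hat a):=L(a)$ is a well-defined linear functional on $\hat A$ with $\bar L(\Psd{\hat A}{X})\subseteq[0,\infty)$. It then suffices to produce a Radon measure $\mu$ on $X$ with $\bar L(c)=\int_X c\,d\mu$ for every $c\in\hat A$, since restricting this identity to $\hat A$ gives $L(a)=\int_X\hat a\,d\mu$.

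Next come two structural observations. Since the hat map is a unital $\reals$-algebra homomorphism, $\hat A$ contains every constant function, hence every bounded function on $X$ — in particular every member of $\Cnt{c}{X}$ — belongs to $\check{\hat A}$; this is exactly what the separation hypothesis provides in Corollary \ref{hvlnd1}, so no separation assumption is needed here. Also, for $x\in X$ and any integer $i>\hat p(x)$, the open set $\{\hat p<i\}$ is a neighbourhood of $x$ whose closure lies in the compact set $X_i$, so $X$ is locally compact Hausdorff, and $X=\bigcup_i X_i$ shows it is $\sigma$-compact. By Proposition \ref{ext-alg}, $\bar L$ extends to a positive functional $\tilde L$ on $\check{\hat A}$.

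The heart of the proof is to verify that $\hat A$ is $\Cnt{c}{X}$-adapted; once this is known, the argument of Corollary \ref{hvlnd1} (via Theorem \ref{RMK} and Theorem \ref{gen-chq}) goes through for $(\hat A,\bar L)$. Given $c=\hat a\in\hat A$, I would take $f=(1+\hat a^2)(1+\hat p)\in\hat A$. Fix $\epsilon\in(0,\tfrac12)$ and put $E=\{x:|\hat a(x)|>\epsilon f(x)\}$. On $E$, since $1+\hat p\ge1$ we get $\epsilon\hat a^2-|\hat a|+\epsilon<0$, so $|\hat a|<\epsilon^{-1}$ on $E$; feeding this back, $|\hat a|>\epsilon(1+\hat p)$ forces $\hat p<\epsilon^{-2}$ on $E$, so $E\subseteq X_N$ with $N=\lceil\epsilon^{-2}\rceil$. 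Using local compactness, choose $\psi\in\Cnt{c}{X}$ with $0\le\psi\le1$ and $\psi\equiv1$ on $X_N$; then $h:=\epsilon^{-1}\psi\in\Cnt{c}{X}$ satisfies $|c|\le\epsilon|f|+h$ on all of $X$ (off $E$ by the definition of $E$, and on $E$ because $h=\epsilon^{-1}>|\hat a|$ there). As $\epsilon$ was arbitrary, $c\dm{\Cnt{c}{X}}f$. Now $\tilde L$ restricts to a positive functional on $\Cnt{c}{X}$, so Theorem \ref{RMK} yields a Radon measure $\mu$ on $X$ with $\tilde L=\int\cdot\,d\mu$ on $\Cnt{c}{X}$; the required $\rho_{\tilde L}$-density of $\Cnt{c}{X}$ in $\Cnt{c}{X}+S_{\mathcal B(X)}$ follows by truncating simple functions to the $X_i$, since $\tilde L(\chi_{X\setminus X_i})\le i^{-1}\tilde L(\hat p)\to0$; and $\Cnt{c}{X}$-adaptedness together with the argument of Theorem \ref{gen-chq} propagates the identity $\tilde L(f)=\int_X f\,d\mu$ to all of $\check{\hat A}$. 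Restricting to $\hat A$ gives $L(a)=\int_X\hat a\,d\mu$, and $\mu(X)=\tilde L(1)=L(1_A)<\infty$, so $\mu$ is a finite, hence genuine, Radon measure.

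I expect the $\Cnt{c}{X}$-adaptedness step to be the only genuine obstacle: one must exploit the properness of $\hat p$ — that each $X_i$ is compact — to trap, for each $\epsilon$, the set where $|\hat a|$ is not $\epsilon$-dominated by $f$ inside one of the compact sets $X_N$, and the quadratic inequality $\epsilon\hat a^2-|\hat a|+\epsilon<0$ is what makes this trapping quantitative. The remaining ingredients — well-definedness of $\bar L$, the local and $\sigma$-compactness of $X$, the Riesz--Markov--Kakutani step, and the density of $\Cnt{c}{X}$ in $\Cnt{c}{X}+S_{\mathcal B(X)}$ — are routine.
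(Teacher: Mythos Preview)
Your proposal is correct and follows essentially the same route as the paper: reduce to showing $\hat A$ is $\Cnt{c}{X}$-adapted and then invoke the machinery of Corollary~\ref{hvlnd1}/Theorem~\ref{gen-chq}. The only differences are cosmetic---the paper dominates a nonnegative $\hat g$ by $\hat f^2$ with $f=g+p$ and builds the compactly supported $h$ via Urysohn on $X_{n+1}$, whereas you dominate an arbitrary $\hat a$ by $(1+\hat a^2)(1+\hat p)$ and use a quadratic inequality to localise; you also spell out the factoring of $L$ through the hat map, the local compactness of $X$, and the $\rho_{\tilde L}$-density step, all of which the paper leaves implicit.
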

\begin{proof}
It suffices to show that $\hat{A}$ is a $\Cnt{c}{X}$-adapted space.
Take $g\in A$ with $\hat{g}\ge0$ on $X$ and let $f=g+p$. The set $X_n=\hat{f}^{-1}([0,n])\subseteq\hat{p}^{-1}([0,n])$ is compact and 
$X_n\subseteq X_{n+1}$. Also for each $n$, the set $Y_n=X_{n+1}\cap\hat{f}^{-1}([n+\frac{1}{2},\infty))$ is closed and disjoint from 
$X_n$.

By Urysohn's lemma there exists a continuous function $\map{e_n}{X_{n+1}}{[0,1]}$ such that $e_n|_{Y_n}=0$ and $e_n|_{X_n}=1$.
Extending $e_n$ to $X$ by defining $e_n(x)=0$ off $X_{n+1}$, we have $g_n=\hat{g}e_n\in\Cnt{c}{X}$.

Fix $\epsilon>0$ and $N>0$ with $\frac{1}{N}<\epsilon$, we have
\[
	\hat{g}-g_N\leq\epsilon \hat{f}^2
\]
on $X$ or $\hat{g}\leq\epsilon\hat{f}^2+g_N$ and hence $\hat{g}\dm{\Cnt{c}{X}}\hat{f}^2$ as desired. Now, theorem \ref{gen-chq} applies 
and the existence of such a measure follows.
\end{proof}
\subsection{Application to Classical Moment Problem}
One can apply Corollary \ref{Mar-Hav} to give a short solution for the classical $K$-moment problem:
\begin{crl}[Haviland \cite{Hav01, Hav02}]
Let $K$ be a closed subset of $\reals^n$ and $\map{L}{\rx}{\reals}$. Then $L$ admits and integral representation with respect to a 
positive Radon measure supported on $K$ if and only if $L(\Psd{}{K})\subseteq[0,\infty)$.
\end{crl}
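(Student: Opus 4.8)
The plan is to derive Haviland's theorem as a direct application of Corollary \ref{Mar-Hav}, taking $A=\rx=\reals[X_1,\dots,X_n]$, the topological space $X=K$ with its subspace topology inherited from $\reals^n$ (so $K$ is itself a locally compact, $\sigma$-compact Hausdorff space since $K$ is closed in $\reals^n$), and the $\reals$-algebra homomorphism $\map{\hat{~}}{\rx}{\Cnt{}{K}}$ to be the restriction map $p\mapsto p|_K$. Under this identification $\Psd{\hat{A}}{K}$ is exactly the set of polynomials that are non-negative on $K$, i.e. $\Psd{}{K}$, so the hypothesis $L(\Psd{}{K})\subseteq[0,\infty)$ is precisely the positivity condition required by Corollary \ref{Mar-Hav}.

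The one thing that needs checking is the existence of a polynomial $p\in\rx$ with $\hat{p}\ge0$ on $K$ and with $\hat{p}^{-1}([0,i])$ compact for every $i\in\naturals$. First I would take $p=X_1^2+\cdots+X_n^2$, which is manifestly non-negative everywhere, so in particular on $K$. Its sublevel set inside $K$ is $\hat{p}^{-1}([0,i])=\{x\in K:\|x\|^2\le i\}=K\cap\bar{B}(0,\sqrt{i})$, the intersection of the closed set $K$ with a closed ball in $\reals^n$, hence closed and bounded, hence compact by Heine--Borel. So the compactness hypothesis of Corollary \ref{Mar-Hav} holds with this choice of $p$.

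With these verifications in place, Corollary \ref{Mar-Hav} immediately yields a Radon measure $\mu$ on $K$ with $L(a)=\int_K\hat{a}~d\mu=\int_K a|_K~d\mu$ for all $a\in\rx$; regarding $\mu$ as a Borel measure on $\reals^n$ supported on $K$ gives the integral representation with respect to a positive Radon measure supported on $K$. This establishes the ``only if'' direction's converse, namely that the positivity condition is sufficient. The converse implication is the routine direction: if $L(a)=\int_K a~d\mu$ for some positive measure $\mu$ supported on $K$ and $a\in\Psd{}{K}$, then $a|_K\ge0$ $\mu$-almost everywhere (indeed everywhere on $\supp\mu\subseteq K$), so $L(a)=\int_K a~d\mu\ge0$. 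There is essentially no obstacle here; the only mild point of care is making sure the chosen $p$ simultaneously satisfies non-negativity on $K$ and compactness of all sublevel sets, which the sum-of-squares choice handles cleanly.
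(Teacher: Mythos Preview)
Your proof is correct and follows essentially the same approach as the paper: apply Corollary~\ref{Mar-Hav} with $p=\sum_{i=1}^n X_i^2$, noting that the sublevel sets $\hat{p}^{-1}([0,i])$ are compact by Heine--Borel. Your version is simply more detailed, and you also spell out the trivial necessity direction, which the paper omits.
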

\begin{proof}
Take $p=\sum_{i=1}^n X_i^2\in\rx$, clearly $p^{-1}([0,n])$ is compact for each $n$. Thus Corollary \ref{Mar-Hav} applies.
\end{proof}
\subsection{Application to Truncated Moment Problem}\label{TMP}
Let $A$ be a subspace of $\Cnt{}{X}$ and $\mathfrak{B}\subset A$ a basis for $A$. Let $V=A+\spn\{1+p^2:p\in\mathfrak{B}\}$ 
and suppose that for every $g\in A$ there exists $f\in V$ such that for every $\epsilon>0$ there is $h\in\Cnt{c}{X}$ satisfying 
$|g|\leq\epsilon|f|+h$. Then according to Remark \ref{sbspc-int-rep}, any positive functional $\map{L}{V}{\reals}$, admits an integral 
representation with respect to a Borel measure on $X$ over $A$. One can improve this situation in the case where $A$ is a finite 
dimensional subspace of the polynomials $\rx$.
\begin{prop}[Curto--Fialkow \cite{Curto-Fialkow01}]\label{C-F}
Let $\map{L}{\rx_{2d}}{\reals}$, $d\ge1$, be a $K$-positive functional where $K\subseteq\reals^n$. Then there exists a Radon measure 
$\mu$ on $K$ such that for all $p\in\rx_{2d-1}$, $L(p)=\int f~d\mu$.
\end{prop}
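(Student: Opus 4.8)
The plan is to reduce the truncated statement to the general framework of Remark \ref{sbspc-int-rep} by choosing an appropriate intermediate space $V$ with $\rx_{2d-1}\subseteq\rx_{2d}\subseteq V$, and then verify the two hypotheses: that a suitable lattice complete $B\subseteq\check{\rx_{2d-1}}$ is $\rho_L$-dense in $B+S_{\mathcal{B}(\reals^n)}$, and that $\rx_{2d-1}$ is dominated (in the $\dm{\Cnt{c}{\reals^n}}$ sense) by elements of $V$. The natural choice is $V=\rx_{2d}$ itself: every monomial basis element $p$ of $\rx_{2d-1}$ satisfies $|p|\leq\tfrac{1+p^2}{2}$ with $1+p^2\in\rx_{2d}$, so $B=\check{\rx_{2d-1}}\cap\Cnt{c}{\reals^n}$ — or more conveniently $B=\Cnt{c}{\reals^n}$ once we know it sits inside $\check{\rx_{2d-1}}$ — gives the domination. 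The point of restricting the conclusion to $\rx_{2d-1}$ (rather than all of $\rx_{2d}$) is precisely that for $p\in\rx_{2d-1}$ one has $|p|\dm{\Cnt{c}{\reals^n}}q$ for a fixed $q\in\rx_{2d}$, whereas a degree-$2d$ polynomial need not be so dominated by another degree-$\leq 2d$ polynomial.

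\textbf{First} I would extend $L$ to a positive functional on $\check{\rx_{2d}}$, or at least on $\rx_{2d}+\Cnt{c}{\reals^n}$: using that $1+p_i^2\in\rx_{2d}$ for each basis polynomial $p_i$ of $\rx_{2d-1}$ and $\hat p=\sum X_i^2$ has compact sublevel sets, the same Urysohn-function construction as in Corollary \ref{Mar-Hav} shows every $g\in\Cnt{c}{\reals^n}$ lies in $\check{\rx_{2d-1}}$ (cover $\supp g$ by finitely many sets where $|g(x)|<1+\|x\|^{2}$, which is a fixed element of $\rx_{2d}$ when $d\geq 1$). So $\Cnt{c}{\reals^n}\subseteq\check{\rx_{2d-1}}$ and in particular $S_{\mathcal{B}}$-type simple functions bounded by compactly supported continuous functions are available. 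Then $V=\rx_{2d}$ is a positive functional domain, $B:=\Cnt{c}{\reals^n}$ is lattice complete, and for every $g\in\rx_{2d-1}$, taking $f=1+\hat p^{\,d}\in\rx_{2d}$ (or a suitable power), the Urysohn cutoffs $e_N$ give $|g|\leq\epsilon|f|+h$ with $h=|g|e_N\in\Cnt{c}{\reals^n}=B$, exactly the adaptedness condition.

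\textbf{Next}, for the density hypothesis I would check that $B=\Cnt{c}{\reals^n}$ is $\rho_{\bar L}$-dense in $B+S_{\mathcal{B}(\reals^n)}$: by the remark following Corollary \ref{hvlnd1}, $\chi_K$ for $K$ compact is a uniform limit of functions in $\Cnt{c}{\reals^n}$, and since $\bar L(1)$ is finite on the relevant truncated picture, uniform convergence on a fixed compact support gives $\rho_{\bar L}$-convergence; a general simple function in $S_{\mathcal{B}}$ bounded by an element of $B$ is handled by the same cut-and-approximate argument used in Lemma \ref{msr-rep}\eqref{msr-rep-3}. With both hypotheses of Remark \ref{sbspc-int-rep} in hand, we obtain a measure $\mu$ on $(\reals^n,\mathcal{B})$ with $L(p)=\int p\,d\mu$ for all $p\in\rx_{2d-1}$; that $\mu$ is Radon follows because $\Cnt{c}{\reals^n}$ is dense in $\Cnt{c}{\reals^n}+S_{\mathcal{B}}$, as in the remark after Corollary \ref{hvlnd1}. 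Finally, $K$-positivity of $L$ forces $\supp\mu\subseteq K$: if $U=\reals^n\setminus K$ had positive measure, pick a compact $K'\subseteq U$ with $\mu(K')>0$ and a function in $\Psd{}{K}$ that is negative on $K'$ and dominated appropriately, contradicting $L\geq 0$ on $\Psd{}{K}$ — here one uses that $\rx_{2d}$ for $d\geq 1$ contains enough polynomials (e.g. $\epsilon\|x\|^{2}$ minus bumps) to separate $K'$ from $K$ while staying in degree $\leq 2d$.

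\textbf{The main obstacle} I expect is the last step: controlling the support of $\mu$ with only polynomials of bounded degree $2d$ at our disposal, since the usual Haviland argument freely multiplies by high-degree polynomials. One must produce, for a compact $K'$ disjoint from the closed set $K$, a polynomial of degree at most $2d$ that witnesses $K$-positivity being violated on a set of positive measure — this likely requires $d\geq 1$ in an essential way and a careful choice such as a scaled square of an affine or quadratic function vanishing near $K'$, combined with the domination estimate to keep everything inside the domain where $\tilde L$ has been shown to equal $\int\cdot\,d\mu$. Making this quantitative, rather than the routine density and extension bookkeeping, is where the real work lies.
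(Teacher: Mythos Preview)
Your overall architecture---invoke Remark \ref{sbspc-int-rep} with $A=\rx_{2d-1}$, $V=\rx_{2d}$, $B=\Cnt{c}{\cdot}$, and verify the domination $|p|\leq\epsilon|f|+h$ with $f\in\rx_{2d}$, $h\in B$---is exactly the paper's route. The paper's proof is in fact a two-line version of your first two paragraphs: observe that any $p\in\rx_{2d-1}$ is bounded outside a compact set by some $f\in\rx_{2d}$, hence $|p|\leq\epsilon|f|+h$ with $h$ compactly supported, and apply Remark \ref{sbspc-int-rep}.

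The difference is that the paper takes the underlying set to be $K$ itself, not $\reals^n$: the functional $L$ is $K$-positive, so one regards $\rx_{2d}\subseteq\Cnt{}{K}$, takes $B=\Cnt{c}{K}$, and the measure produced by Remark \ref{sbspc-int-rep} lives on $K$ automatically. Your choice to work on $\reals^n$ and then argue \emph{a posteriori} that $\supp\mu\subseteq K$ is precisely where your ``main obstacle'' arises---and it is a real obstacle, not just bookkeeping. Separating an arbitrary compact $K'\subset\reals^n\setminus K$ from a closed $K$ by a polynomial of degree at most $2d$ that stays nonnegative on $K$ is not generally possible (think of $K$ an algebraic set of high degree, or with many components), so that final step cannot be completed as you sketch it. The remedy is simply to set $X=K$ from the outset; then your argument goes through without the support step and coincides with the paper's.
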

Here $\rx_m$ denotes the set of all polynomials of degree at most $m$.
\begin{proof}
Every polynomial $p\in\rx_{2d-1}$, is bounded by a polynomial of degree $2d$ outside of a compact set. In other words, there exists
$h\in\rx_{2d}$ and $h\in\Cnt{c}{K}$ such that $|p|\leq|f|+h$. Therefore Remark \ref{sbspc-int-rep} applies and hence $L|_{\rx_{2d-1}}$
is representable by a Radon measure on $K$.
\end{proof}
As it is proved in \cite[Theorem 2.2]{Curto-Fialkow01}, the combination of proposition \ref{C-F} and Bayer--Teichmann 
\cite[Theorem 2]{Bayer-Teichmann}, implies the following:
\begin{crl}
A functional $\map{L}{\rx_{2d}}{\reals}$ admits a $K$-representing measure if and only if $L$ extends positively to $\rx_{2d+2}$.
\end{crl}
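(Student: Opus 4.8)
The plan is to prove both directions of the equivalence, with the nontrivial direction being the one that produces a representing measure from a positive extension. Throughout I would write $\rx_m$ for polynomials of degree at most $m$ and freely use that $\rx_{2d}$ and $\rx_{2d+2}$ are finite-dimensional.

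\textbf{The easy direction.} Suppose $\map{\mu}{}{}$ is a $K$-representing measure for $L$, so $L(p)=\int_K p\,d\mu$ for all $p\in\rx_{2d}$. I would define $\tilde L$ on $\rx_{2d+2}$ by $\tilde L(q)=\int_K q\,d\mu$, provided this integral is finite for every $q\in\rx_{2d+2}$. Finiteness is where a small argument is needed: a $K$-representing measure for a functional on $\rx_{2d}$ need not have moments up to order $2d+2$ a priori. However, if one only asks that $L$ extend \emph{positively} to $\rx_{2d+2}$, then one should instead argue: since $\mu$ represents $L$ on $\rx_{2d}$ and in particular $\int_K(1+\|x\|^2)^{d}\,d\mu<\infty$, Curto--Fialkow's construction in fact yields a finitely atomic measure (by Proposition \ref{C-F} combined with Bayer--Teichmann as cited), and a finitely atomic measure has all moments finite. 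Then $\tilde L(q)=\int_K q\,d\mu$ is a well-defined linear functional on $\rx_{2d+2}$ extending $L$, and it is positive: if $q\in\rx_{2d+2}$ with $q\ge0$ on $K$, then $\tilde L(q)=\int_K q\,d\mu\ge0$. I expect this to be essentially a one-paragraph verification once the atomic refinement is invoked.

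\textbf{The hard direction.} Suppose $L$ extends to a positive functional $\map{L'}{\rx_{2d+2}}{\reals}$, i.e. $L'(\Psd{\rx_{2d+2}}{K})\subseteq[0,\infty)$ and $L'|_{\rx_{2d}}=L$. I want to apply Proposition \ref{C-F} with $2d$ replaced by $2d+2$, i.e. with $d$ replaced by $d+1$. That proposition requires $L'$ to be $K$-positive on $\rx_{2(d+1)}=\rx_{2d+2}$, which is exactly the hypothesis. Hence Proposition \ref{C-F} produces a Radon measure $\mu$ on $K$ with $L'(p)=\int_K p\,d\mu$ for all $p\in\rx_{2(d+1)-1}=\rx_{2d+1}$. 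In particular $\rx_{2d}\subseteq\rx_{2d+1}$, so $L(p)=L'(p)=\int_K p\,d\mu$ for all $p\in\rx_{2d}$; that is, $\mu$ is a $K$-representing measure for $L$. This completes the equivalence.

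\textbf{Main obstacle.} The delicate point is purely bookkeeping with the degree indices: one must notice that a positive extension to $\rx_{2d+2}$ is precisely what makes Proposition \ref{C-F} applicable \emph{at level $d+1$}, whose conclusion (representation on $\rx_{2(d+1)-1}=\rx_{2d+1}$) comfortably covers $\rx_{2d}$. A secondary subtlety, already noted, is ensuring in the easy direction that the representing measure obtained can be taken to have moments of all orders so that the extension is literally a functional on $\rx_{2d+2}$; this is handled by passing to the finitely atomic representing measure guaranteed by the Curto--Fialkow/Bayer--Teichmann machinery cited just before the corollary. Neither step requires new ideas beyond correctly chaining Proposition \ref{C-F} with the cited flatness/atomic results, so the proof is short.
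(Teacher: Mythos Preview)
Your argument is correct and matches the paper's indicated approach: the paper's own proof is simply a citation to \cite[Theorem 2.2]{Curto-Fialkow01}, after remarking that the result follows by combining Proposition \ref{C-F} with Bayer--Teichmann, which is precisely the combination you spell out (Proposition \ref{C-F} at level $d+1$ for the hard direction, Bayer--Teichmann's finitely atomic refinement for the easy direction). One minor clarification: in the easy direction you only need Bayer--Teichmann to pass to a finitely atomic measure---Proposition \ref{C-F} plays no role there---but this does not affect the correctness of your outline.
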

\begin{proof}
See \cite[Theorem 2.2]{Curto-Fialkow01}.
\end{proof}
\subsection{Representation of a Functional on $\mathrm{C}(X)$}\label{Riesz-Cnt}
Now we state a variation of Riesz--Markov--Kakutani representation theorem where $\Cnt{c}{X}$ is replaced by $\Cnt{}{X}$ and
$X$ is assumed to be a $\sigma$-compact space.
We show that a positive linear functional on $\Cnt{}{X}$ is representable via a Radon measure on $X$, but it has to have a 
compact support.
\begin{dfn}
A subalgebra $B\subseteq A$ is called $\sigma$-complete when for any sequence $(b_n)_{n\in\naturals}\subset B$, if $b=\sum_n b_n$ 
exists in $A$ then $b\in B$.
\end{dfn}
\begin{thm}\label{gen-sig-Riesz}
Let $A$ be a $\sigma$-complete subalgebra of $\Cnt{}{X}$ which separates points of $X$. Suppose that $X$ is a $\sigma$-compact, 
locally compact and Hausdorff space and $\map{L}{A}{\reals}$ is a positive linear functional. Then there exists a Radon measure
$\mu$ on $X$ with compact support, such that $L(f)=\int_Xf~d\mu$ for all $f\in A$.
\end{thm}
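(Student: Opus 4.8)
The plan is to apply Theorem \ref{gen-chq} (together with Corollary \ref{hvlnd1}) to the algebra $A$, and then to use $\sigma$-compactness together with positivity of $L$ to force the representing measure to be finite with compact support. First I would observe that $A$ separates points and is a subalgebra of $\Cnt{}{X}$; exactly as in the proof of Corollary \ref{hvlnd1}, for every $g\in\Cnt{c}{X}$ one produces finitely many $g_{x_1},\dots,g_{x_n}\in\Psd{A}{X}$ with $|g|<\sum_i g_{x_i}$, so $\Cnt{c}{X}\subseteq\check A$ and the positive extension $\tilde L$ of $L$ to $\check A$ is positive on $\Cnt{c}{X}$. By Riesz--Markov--Kakutani there is a Radon measure $\mu$ on $X$ with $\tilde L(f)=\int_X f\,d\mu$ for all $f\in\Cnt{c}{X}$. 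The key point is that, unlike in Corollary \ref{hvlnd1}, here $A$ contains the constant function $1$ (it is a unital subalgebra — or at least one reduces to that case), and $\sigma$-completeness will be what upgrades the $\Cnt{c}{X}$-representation to an $A$-representation and pins down the support.

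Next I would exploit $\sigma$-compactness: write $X=\bigcup_{n}K_n$ with $K_n$ compact, $K_n\subseteq\mathrm{int}\,K_{n+1}$ (possible since $X$ is locally compact and $\sigma$-compact). Choose, via Urysohn, $e_n\in\Cnt{c}{X}$ with $0\le e_n\le 1$, $e_n\equiv 1$ on $K_n$ and $\mathrm{supp}\,e_n\subseteq K_{n+1}$. Then $1=\sum_n b_n$ with $b_1=e_1$, $b_n=e_n-e_{n-1}\ge 0$ converges pointwise and in fact locally uniformly in $\Cnt{}{X}$, so by $\sigma$-completeness each partial sum lies in $A$ and $1\in A$ already gives us the handle we need: $\tilde L(1)=\sum_n \tilde L(b_n)=\sum_n\int_X b_n\,d\mu=\lim_n\int_X e_n\,d\mu$, and the latter increases to $\mu(X)$ by monotone convergence. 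Hence $\mu(X)=\tilde L(1)<\infty$, so $\mu$ is a finite Radon measure. I would then show the support is compact: if not, then for every $n$ there is mass outside $K_n$, i.e. $\mu(X\setminus K_n)\ge\delta>0$ for all $n$ along a subsequence — but $\mu(X\setminus K_n)\to 0$ since $\mu$ is finite and $\bigcap_n(X\setminus K_n)=\emptyset$, a contradiction unless one argues more carefully. The actual argument: pick a point $x_k\notin K_{n_k}$ in the support for a putative unbounded support, build via the separating property an unbounded nonnegative $f\in A$ (using products/sums of the $g_x$'s, rescaled, with $\sigma$-completeness to sum them) for which $\int_X f\,d\mu=\infty$ while $\tilde L(f)<\infty$, contradiction. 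So $\mathrm{supp}\,\mu$ is compact.

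Finally I would check that $\mu$ represents $L$ on all of $A$, not merely on $\Cnt{c}{X}$. Since $\mathrm{supp}\,\mu=:K$ is compact and $A\subseteq\Cnt{}{X}$, every $f\in A$ is bounded on $K$; using the $e_n$ above with $K\subseteq\mathrm{int}\,K_{n}$ for large $n$, we get $fe_n\in\Cnt{c}{X}$ with $fe_n=f$ on a neighbourhood of $K$, hence $\int_X f\,d\mu=\int_X fe_n\,d\mu=\tilde L(fe_n)$. It remains to see $\tilde L(fe_n)=\tilde L(f)$, i.e. $\tilde L(f(1-e_n))=0$; writing $f(1-e_n)=f\sum_{k> n}b_k$ and noting $|f(1-e_n)|\le\|f\|_{K^c\text{-growth}}\cdot\sum_{k>n}b_k$ with the tails controlled, positivity of $\tilde L$ and $\tilde L(\sum_{k>n}b_k)=\mu(X\setminus K_n)\to 0$ squeeze $\tilde L(f(1-e_n))\to 0$; but also $\tilde L(f(1-e_n))$ is eventually constant (it equals $\tilde L(f)-\int_X f\,d\mu$, independent of $n$ once $K\subseteq\mathrm{int}\,K_n$), hence identically $0$. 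Therefore $L(f)=\tilde L(f)=\int_X f\,d\mu$ for all $f\in A$. The main obstacle is the middle step — proving compactness of the support and simultaneously that $\mu$ is finite — since this is exactly where one must convert the purely algebraic separating/$\sigma$-complete hypotheses into a growth bound on $A$ that a finite measure can see; everything before and after is a routine adaptation of the arguments already used for Corollary \ref{hvlnd1} and Theorem \ref{gen-chq}.
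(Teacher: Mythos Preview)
Your route---extend $L$ to $\check A$, apply Riesz--Markov--Kakutani to $\tilde L|_{\Cnt{c}{X}}$, and then argue that the resulting $\mu$ has compact support and represents $L$ on $A$---has a genuine gap at precisely the point you flag as ``the main obstacle,'' and the patching you propose does not close it.

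First, your use of $\sigma$-completeness is misapplied: the functions $b_n=e_n-e_{n-1}$ are Urysohn functions in $\Cnt{c}{X}$, not elements of $A$, so the hypothesis ``$(b_n)\subset A$ and $\sum b_n\in\Cnt{}{X}$ implies $\sum b_n\in A$'' says nothing here. More seriously, even granting $\mu(X)<\infty$, a finite Radon measure need not have compact support (think of a Gaussian on $\reals$), and your fallback---``build via the separating property an unbounded nonnegative $f\in A$ with $\int f\,d\mu=\infty$''---is not a proof. Finally, in the last step you need $\tilde L(f(1-e_n))=0$, but $f$ may be unbounded on $X\setminus K$, so there is no inequality of the form $|f(1-e_n)|\le C\sum_{k>n}b_k$ available; positivity of $\tilde L$ alone cannot squeeze this to zero. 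In effect you are trying to prove that $A$ is $\Cnt{c}{X}$-adapted, which is neither assumed nor established.

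The paper's argument avoids all of this by proving the compactness \emph{before} producing any measure, and at the level of $L$ on $A$ itself. The key step is: if no compact $K$ satisfies ``$f|_K=0\Rightarrow L(f)=0$,'' then for each $X_n$ in an exhausting compact sequence one finds $g_n\in A$ with $g_n\ge 0$, $g_n|_{X_n}=0$, and $L(g_n)>1$; the sum $g=\sum_n g_n$ is locally finite hence lies in $\Cnt{}{X}$, and now $\sigma$-completeness (applied correctly, to $g_n\in A$) gives $g\in A$, whence $L(g)\ge\sum_n 1=\infty$, a contradiction. With such a $K$ in hand, $L$ factors through $A/I_A(K)\hookrightarrow\Cnt{}{K}$ (density by Stone--Weierstrass), and Riesz--Markov--Kakutani on the compact space $K$ finishes immediately. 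This is the missing idea: use $\sigma$-completeness to manufacture an element of $A$ witnessing the contradiction, rather than Urysohn functions that live only in $\check A$.
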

\begin{proof}
We break the proof into three steps.

\textbf{Step 1.} There is a sequence $(X_n)_{n\in\naturals}$ of compact subsets of $X$ such that $X=\cup_nX_n$ and 
$X_n\subseteq X_{n+1}^{\circ}$.

To see this, choose a countable compact cover $(C_n)_{n\in\naturals}$ for $X$ and set $D_n=\cup_{i=1}^nC_i$. For every $n$, the set $D_n$ 
is compact and since $X$ is locally compact, every $x\in D_n$ has a neighbourhood $V_x$ where $\cl{}{V_x}$ is compact. The collection
$\{V_x~:~x\in D_n\}$ is an open cover for $D_n$ and hence there are $x_1,\dots,x_t\in D_n$ such that $D_n\subseteq\cup_{i=1}^tV_{x_i}$.
Now, let $X_n=\cup_{i=1}^t\cl{}{V_{x_i}}$, the collection $(X_n)_{n\in\naturals}$ is the desired sequence.

\textbf{Step 2.} There exists a compact $K\subseteq X$ such that $f|_K=0$ implies $L(f)=0$.

Assume that such $K$ does not exist. So for every compact $C\subseteq X$, there exists $f_C\in A$ such that $f_C|_C=0$ but $L(f_C)\neq0$.
Substitute $f_C$ with $f_C^2$ if necessary, we can assume that $f_C\ge0$ and hence $L(f_C)>0$. Thus for sufficiently large $N_C$, 
$L(N_Cf_C)>1$. Take a compact cover $(X_n)_{n\in\naturals}$ of $X$ with $X_n\subseteq X_{n+1}^{\circ}$ and denote $N_{X_n}f_{X_n}$ by $g_n$.
For every $x\in X$, $\sum_{n}g_n(x)$ is finite and is continuous according to the choice of $(X_n)_{n\in\naturals}$. 
Therefore $g=\sum_ng_n\in\Cnt{}{X}$ and since $A$ is $\sigma$-complete, $g\in A$. Computing $L(g)$, we get
\[
	\sum_n1<\sum_nL(g_n)\leq L(\sum_ng_n)=L(g)<\infty,
\]
a contradiction. This proves that such a compact $K\subseteq X$ must exists.

\textbf{Step 3.} There exists a linear functional $\tilde{L}$ on $\Cnt{}{K}$ such that 
\[
	\forall f\in A\quad L(f)=\tilde{L}(f|_K).
\]

Let $I(K)=\{f\in\Cnt{}{X}:f|_K=0\}$ and $I_A(K)=I(K)\cap A$. Applying Titze extension theorem, we can identify $\frac{\Cnt{}{X}}{I(K)}$
with $\Cnt{}{K}$. By Step 2, $I_A(K)\subseteq\ker L$, therefore $L$ factors through $\map{\pi}{A}{\frac{A}{I_A(K)}}$.
Moreover the map $\map{\vartheta}{\frac{A}{I_A(K)}}{\frac{\Cnt{}{X}}{I(K)}}$ defined by $f+I_A(K)\mapsto f+I(K)$ is well-defined and injective.
Since $A$ separates points of $X$, $\frac{A}{I_A(K)}$ also separates points of $K$ and by Stone-Weierstrass theorem, image of $\vartheta$ is
dense in $\Cnt{}{K}$. Hence $\bar{L}$ admits an extension $\map{\tilde{L}}{\Cnt{}{K}}{\reals}$ which respects positivity and 
$\tilde{L}(f|_K)=L(f)$ for all $f\in A$ (see Figure \ref{Fig1}).
\begin{figure}
\[
\xymatrix{
	A\ar[r]^{\pi}\ar[dr]_{L} & \frac{A}{I_A(K)}\ar[d]^{\bar{L}}\ar[r]^{\vartheta} & \frac{\Cnt{}{X}}{I(K)}\ar[dl]^{\tilde{L}} \\ 
	& \reals
}
\]
\caption{}\label{Fig1}
\end{figure}
Now Riesz--Markov--Kakutani representation theorem guarantees the existence of a Radon measure $\nu$ on $K$ such that 
\[
	\forall g\in\Cnt{}{K}\quad\tilde{L}(g)=\int g~d\nu.
\]
Extend $\nu$ to a measure $\mu$ on $X$ by defining $\mu(E)=\nu(E\cap K)$, we have
\[
	\forall f\in A\quad L(f)=\int f~d\mu,
\]
as desired.
\end{proof}
\section{Representation of a Functional over a subalgebra of $\ell^{\infty}(X)$}\label{lmc2d}
In this section we consider a functional $\map{L}{A}{\reals}$ where $A$ is a unital commutative $\reals$-algebra for which
there exists an $\reals$-algebras homomorphism $\map{\iota}{A}{\ell^{\infty}(X)}$, and investigate the possibility of 
representing $L$ as an integral with respect to a measure $\mu$ on $X$ such that
\[
	L(a)=\int_X\iota a~d\mu\quad\forall a\in A.
\]
The algebra $\ell^{\infty}(X)$ is naturally equipped with a norm defined as
\[
	\norm{X}{f}=\sup_{x\in X}|f(X)|
\]
for every $f\in\ell^{\infty}(X)$. This induces a seminorm $\norm{}{}$ on $A$ defined by $\norm{}{a}=\norm{X}{\iota a}$.
The relation between locally multiplicatively convex topologies (such as $\norm{X}{}$), positive cones and integral representation
of continuous functionals has been studied in \cite{MGHSKMM}. What follows, is a consequence of \cite[Theorem 3.7]{MGHSKMM}.

Let $d\ge1$ an integer number. We denote by $\ringsop{A}{2d}$, the cone of all finite sums of
$2d^{th}$ powers of elements of $A$; i.e.,
\[
	\ringsop{A}{2d}=\{a_1^{2d}+\cdots+a_m^{2d}~:~a_1,\dots,a_m\in A,~ m\ge1\}.
\]
We denote the set of all $\norm{}{}$-continuous real valued algebra homomorphisms on $A$ by $\Sp{\norm{}{}}{A}$ which is known as
the Gelfand spectrum of the seminormed algebra $(A,\norm{}{})$.
\begin{prop}\label{nrm-sod}
Let $\map{\iota}{A}{\ell^{\infty}(X)}$ be an $\reals$-algebras homomorphism and $d\ge1$ an integer. 
Then $\cl{\norm{X}{}}{\ringsop{A}{2d}}=\Psd{A}{X}$ where $\norm{}{}$ is the pull-back seminorm induced by $\norm{X}{}$ on 
$A$ through $\iota$.
\end{prop}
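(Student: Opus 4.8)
The plan is to prove the two inclusions separately. The inclusion $\cl{\norm{X}{}}{\ringsop{A}{2d}}\subseteq\Psd{A}{X}$ is the easy direction: each generator $a^{2d}$ of the cone $\ringsop{A}{2d}$ satisfies $\iota(a^{2d})=(\iota a)^{2d}\ge 0$ pointwise on $X$, since $\iota$ is an algebra homomorphism, so $\ringsop{A}{2d}\subseteq\Psd{A}{X}$; and $\Psd{A}{X}$ is closed in the seminorm $\norm{}{}$, because $\norm{}{a}=\norm{X}{\iota a}$ and a uniform limit of non-negative functions on $X$ is again non-negative. I would dispose of this in a couple of lines.

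The substance lies in the reverse inclusion $\Psd{A}{X}\subseteq\cl{\norm{X}{}}{\ringsop{A}{2d}}$, which I would obtain from the Weierstrass approximation theorem applied to $t\mapsto\sqrt[2d]{t}$. Given $a\in\Psd{A}{X}$, set $b=\iota a\in\ell^{\infty}(X)$ and $M=\norm{X}{b}$; since $b\ge 0$ on $X$, the range of $b$ lies in $[0,M]$. The function $t\mapsto\sqrt[2d]{t}$ is continuous on $[0,M]$, so for every $\eta>0$ there is a real polynomial $q$ with $|q(t)-\sqrt[2d]{t}|<\eta$ for all $t\in[0,M]$. Because $A$ is unital, $q$ may be substituted at $a$, giving $q(a)\in A$ with $\iota(q(a))=q(b)$, hence $q(a)^{2d}\in\ringsop{A}{2d}$ and $\iota(q(a)^{2d})=q(b)^{2d}$. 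It then remains to see that $q(b)^{2d}$ is uniformly close to $b$ on $X$: since $b(x)\in[0,M]$ for each $x$, this amounts to bounding $\sup_{t\in[0,M]}\bigl|q(t)^{2d}-t\bigr|$, and writing $t=\bigl(\sqrt[2d]{t}\bigr)^{2d}$ this quantity is controlled by $\sup_{t\in[0,M]}|q(t)-\sqrt[2d]{t}|<\eta$ via the Lipschitz continuity of $s\mapsto s^{2d}$ on the bounded interval containing the values $q(t)$ and $\sqrt[2d]{t}$. Letting $\eta\to 0$ yields $\norm{}{a-q(a)^{2d}}\to 0$, so $a\in\cl{\norm{X}{}}{\ringsop{A}{2d}}$.

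I expect the only point needing attention to be the transfer of the uniform estimate from $q(t)-\sqrt[2d]{t}$ to $q(t)^{2d}-t$ on $[0,M]$: this is routine, but it is exactly where the integer $d$ and the finiteness of $M=\norm{X}{b}$ (i.e.\ the fact that $\iota a\in\ell^{\infty}(X)$) come in, together with the availability of polynomial substitution, which is why $A$ is taken unital. No compactness, local compactness, or separation hypothesis on $X$ is required. Alternatively, the statement can be deduced directly from \cite[Theorem 3.7]{MGHSKMM}, of which it is announced to be a consequence.
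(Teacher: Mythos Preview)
Your argument is correct and considerably more direct than the paper's. You approximate $t\mapsto t^{1/(2d)}$ uniformly on $[0,M]$ by a polynomial $q$ and observe that the single $2d$-th power $q(a)^{2d}$ already approximates $a$ in $\norm{}{}$; this needs only Weierstrass approximation, the unitality of $A$, and the boundedness of $\iota a$. The paper instead passes through the Gelfand spectrum: it first proves (via Stone--Weierstrass and a separation argument) that $\iota_*X$ is dense in $\Sp{\norm{}{}}{A}$, then invokes \cite[Theorem~3.7]{MGHSKMM}, which gives $\cl{\norm{}{}}{\ringsop{A}{2d}}=\Psd{A}{\Sp{\norm{}{}}{A}}$, and finally identifies $\Psd{A}{\Sp{\norm{}{}}{A}}$ with $\Psd{A}{X}$ using the density claim. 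Your route is self-contained and avoids both Gelfand theory and the external reference; the paper's route, on the other hand, yields as a byproduct the density of $\iota_*X$ in $\Sp{\norm{}{}}{A}$, a structural fact the paper relies on immediately afterwards when comparing representing measures on $\Sp{\norm{}{}}{A}$ with measures on $X$. Your closing remark that one could alternatively cite \cite[Theorem~3.7]{MGHSKMM} is exactly what the paper does, but note that the identification $\Psd{A}{\Sp{\norm{}{}}{A}}=\Psd{A}{X}$ still requires the density claim, which is the real content of the paper's proof.
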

\begin{proof}
The map $\iota$ induces a function $\map{\iota_*}{X}{\Sp{\norm{}{}}{A}}$ by $\iota_*(x)(a)=\iota a(x)$.\\
\indent\textit{Claim.} $\iota_*X$ is dense in $\Sp{\norm{}{}}{A}$.

Every element $a\in A$ corresponds to a continuous map $\map{\hat{a}}{\Sp{\norm{}{A}}}{\reals}$ defined as $\hat{a}(\alpha)=\alpha(a)$.
If $\iota_*X$ is not dense in $\Sp{\norm{}{}}{A}$, then we can take $\alpha\in\Sp{\norm{}{}}{X}\setminus\cl{}{\iota_*X}$. 
Since $\Sp{\norm{}{}}{A}$ is compact and Hausdorff, there exists $f\in\Cnt{}{\Sp{\norm{}{}}{A}}$ such that $f(\alpha)=1$ and 
$f|_{\iota_*X}=0$. Clearly, $A$ separates points of $\Sp{\norm{}{}}{A}$, by Stone--Weierstrass theorem it is dense in 
$\Cnt{}{\Sp{\norm{}{}}{A}}$. Therefore, for $\epsilon>0$, there exists $a_{\epsilon}\in A$ with $\norm{}{f-a_{\epsilon}}<\epsilon$.
Take $\epsilon>0$ such that $\frac{1-\epsilon}{\epsilon}>1$. Then $|f(\alpha-\alpha(a_{\epsilon})|=|1-\alpha(a_{\epsilon})|<\epsilon$
or $1-\epsilon<|\alpha(a_{\epsilon})|<1+\epsilon$. Also $|f(\iota_*x)-\iota a_{\epsilon}(x)|=|\iota a_{\epsilon}(x)|<\epsilon$ for all
$x\in X$.But
\[
\begin{array}{lcl}
	\sup_{\beta\in\Sp{\norm{}{}}{A}}|\beta(a_{\epsilon})| & \leq & \norm{}{a_{\epsilon}}\\
		& \leq & \sup_{x\in X}|\iota a_{\epsilon}(x)|\\
		& \leq & \epsilon\\
		& < & 1-\epsilon,
\end{array}
\]
and hence $|\alpha(a_{\epsilon})|<1-\epsilon$, a contradiction. This proves the claim.

By \cite[Theorem 3.7]{MGHSKMM}, $\cl{\norm{}{}}{\ringsop{A}{2d}}=\Psd{A}{\Sp{\norm{}{}}{A}}$. By the above claim $\iota_*X$,  
is dense in $\Sp{\norm{}{}}{A}$ and therefore $\hat{a}\ge0$ on $\Sp{\norm{}{}}{A}$ if and only if $\hat{a}\ge0$ on $\iota_*X$ 
or equivalently $\iota a\ge0$ on $X$.
\end{proof}
An application of Banach separation theorem shows that $\cl{\norm{X}{}}{\ringsop{A}{2d}}=\Psd{A}{X}$ holds, 
if and only if every $\norm{}{}$-continuous functional $\map{L}{A}{\reals}$ that is non-negative on $\ringsop{A}{2d}$, 
admits an integral representation with respect to a Radon measure $\mu$ supported on $\Sp{\norm{}{}}{A}$ 
(\cite[Corollary 3.8]{MGHSKMM}), such that:
\[
	L(a)=\int_{\Sp{\norm{}{}}{A}} \hat{a}~d\mu.
\]
Since we are representing $A$ as a subalgebra of $\ell^{\infty}(X)$, it is natural to ask if $L$ comes from a positive
measure $\nu$ on $X$ such that $L(a)=\int_X\iota a~d\nu$.

The answer to the above question is negative in general. We consider the case where $A$ separates points of $X$ and hence 
$\Sp{\norm{}{}}{A}$ contains $X$ as a dense subspace, according to the claim proved in proposition \ref{nrm-sod}.
\begin{exm}
Suppose that $X$ is equipped with a non-compact completely regular topology and $A=\Cnt{b}{X}$. 
Then $\beta X\setminus X\neq\emptyset$. Every homomorphism $\alpha\in\beta X\setminus X$ is a continuous functional which
corresponds to the point-measure $\delta_{\alpha}$ supported at $\alpha$ itself. Clearly $\alpha(f)=\int f~d\delta_{\alpha}$,
and it is not representable with respect to any measure supported on $X$.
\end{exm}
The next example shows that there might exist a non-extremal functional which is representable by a measure on $\Sp{\norm{}{}}{A}$
but not on $X$.
\begin{exm}
Suppose that $Y$ is a separable compact Hausdorff space and $\mu$ be a measure on $Y$ such that each point is of zero measure. 
Let $X$ be a countable dense subset of $Y$ and $A=\Cnt{b}{X}$. Then $\Sp{}{A}=Y$ and the functional $L(f)=\int_Yf~d\mu$ is 
positive and continuous. But the restriction of $\mu$ on $X$ is the $0$ measure. Therefore $L$ does not have an integral
representation with respect to a measure on $X$.
\end{exm}
\begin{rem}~
\begin{enumerate}
	\item{
	For any positive Radon measure $\mu$ on $\Sp{\norm{}{}}{A}$, the corresponding functional $L_{\mu}$ defined as 
	$L_{\mu}(a)=\int\hat{a}~d\mu$ is $\norm{}{}$-continuous. Since $\Sp{\norm{}{}}{A}$ is compact, 
	$L_{\mu}(1)=\mu(\Sp{\norm{}{}}{A})\leq\infty$, and $L_{\mu}(a)\leq\norm{}{a}L_{\mu}(1)$ which is equivalent to 
	$\norm{}{}$-continuity 	of $L_{\mu}$.
	}
	\item{
	Let $X$ be a compact Hausdorff space, $A\subseteq\Cnt{}{X}$ a unital separating subalgebra and $\mu$ and $\nu$ 
	radon measures on $X$ such that $\int a~d\mu=\int a~d\nu$ for all $a\in A$. Then one can prove that $\mu=\nu$,
	almost every where.
	
	To see this, we first prove that $\supp(\mu)=\supp(\nu)$. Otherwise, there exists a compact set 
	$Z\subseteq X\setminus\supp\mu$ with $\nu(Z)>0$. We can choose $\epsilon$ such that 
	$0<\epsilon<\frac{\nu(Z)}{\mu(X)+\nu(Z)}$. Since $\supp(\mu)$ and $Z$ are compact, there exists a continuous 
	function on $X$ such that $f|_Z=1$ and $f|_{\supp(\mu)}=0$. By Stone-Weierstrass theorem, $\exists a\in A$ such that 
	$|f-a|\leq\epsilon$ on $Z$ and 	$|a|\leq\epsilon$ on $\supp(\mu)$. Replacing $a$ by $a^2$ if necessary, we can assume 
	that $a\ge0$. Therefore $\int a~d\mu\leq\epsilon\mu(X)$ and 
	\[
		(1-\epsilon)\nu(Z)\leq\int_Z a~d\nu\leq\int a~d\nu.
	\] 
	Thus $\nu(Z)\leq(\mu(\supp(\mu))+\nu(Z))\epsilon$, a contradiction.
	Similarly, we can prove that $\int g~d\mu=\int g~d\nu$ for all $g\in\Cnt{}{\supp(\mu)}$ and hence $\mu=\nu$, 
	almost everywhere.
	}
\end{enumerate}
\end{rem}
\textbf{Acknowledgements}: The author wishes to thank Murray Marshall for his useful comments and ideas which formed 
the materials presented in section \ref{lmc2d}.
\bibliographystyle{plain}
\bibliography{IntegralRep}
\end{document}